
\documentclass[10pt, reqno]{amsart}

\usepackage{amssymb,latexsym}
\usepackage{eucal}
\usepackage{tikz}
\usepackage{comment}

\numberwithin{equation}{section}

\newtheorem{thm}{Theorem}[section]

\newtheorem{lemma}{Lemma}[section]
\newtheorem{theorem}{Theorem}[section]

\newcommand{\al}{\alpha}

\newcommand{\ph}{\varphi}

\newcommand{\wtl}{\widetilde}
\newcommand{\wht}{\widehat}
\DeclareMathOperator{\core}{core}

\DeclareMathOperator{\St}{\textsf{St}}
\newcommand{\brr}{\bar {\mathrm{r}}}
\newcommand{\rr}{\mathrm{r}}
\newcommand{\x}{\times}

\newcommand{\cncm}{connected component}
\newcommand{\ED}{elementary deformation}

\begin{document}

\title[On joins and intersections of subgroups in free groups]
 {On joins and intersections of subgroups in free groups}
\author{S. V. Ivanov}
 \address{Department of Mathematics\\
 University of Illinois \\
 Urbana\\   IL 61801\\ U.S.A.} \email{ivanov@illinois.edu}
\thanks{Supported in part by the NSF under grant DMS 09-01782.}
\subjclass[2010]{Primary 20E05, 20E07, 20F65, 57M07.}

\begin{abstract}
 We study  graphs of (generalized) joins and intersections of finitely generated subgroups of a free group.
 We show how to disprove a lemma of Imrich and M\"uller on these graphs and how to repair this lemma.
\end{abstract}

\maketitle

\section{Introduction}

Suppose  that $F$ is a free group of finite rank, $\rr(F)$ denotes the rank of $F$, and  $\brr(F) := \max
(\rr(F)-1,0)$ is the reduced rank of $F$.  Let  $H, K$ be finitely
generated subgroups of  $F$ and let    $\langle  H, K \rangle$ denote the the subgroup generated by $H, K$, called the
{\em join} of $H, K$.  Hanna Neumann \cite{N1}  proved
that $ \brr (H \cap K) \le 2 \bar \rr(H) \bar \rr(K)$  and
conjectured that  $\brr (H \cap K) \le  \bar \rr(H) \bar \rr(K)$. This problem, known as the Hanna Neumann conjecture
on subgroups of free groups, was solved in the affirmative by Friedman \cite{F} and Mineyev \cite{M},
see also Dicks's proof \cite{D1}. Relevant results  and generalizations of this conjecture can be found in
\cite{D}, \cite{DIv}, \cite{DIv2}, \cite{I99}, \cite{I01}, \cite{I08}, \cite{WN}, \cite{St}.

Imrich and M\"uller \cite{IM} introduced the reduced rank $\brr (\langle  H, K \rangle)$ of the join $\langle  H, K \rangle$ in this context and attempted to prove the following inequality
\begin{equation}\label{inq11}
\bar \rr(H  \cap K) \le   2 \bar \rr(H) \bar \rr(K) - \brr (\langle  H, K \rangle) \min(\bar \rr(H), \bar \rr(K))
\end{equation}
under the assumption that if $H^*, K^*$ are free factors of $H, K$, resp., then the equality
$H^*\cap K^* = H  \cap K$ implies that $H= H^*$ and $K = K^*$. We remark that this inequality provides a stronger
than  Hanna Neumann conjecture's bound for $\bar \rr(H  \cap K)$ whenever
$\brr (\langle  H, K \rangle) > \max(\bar \rr(H), \bar \rr(K))$ and this looks quite remarkable. We also note that
\eqref{inq11} was an improvement of an earlier result of Burns \cite{B}, see also \cite{N2}, \cite{S}, stating
that
\begin{equation*}
\bar \rr(H  \cap K) \le   2 \bar \rr(H) \bar \rr(K) - \min(\bar \rr(H), \bar \rr(K)) .
\end{equation*}

Later Kent \cite{K} discovered a serious gap in the proof of a key lemma of Imrich--M\"uller \cite[p.~195]{IM} and gave his own proof to the inequality \eqref{inq11} under the weakened assumption that $H  \cap K \ne \{ 1 \}$. Kent
\cite[p.~312]{K} remarks that the lemma in \cite{IM} ``would be quite useful, and though its proof is incorrect, we do
not know if the lemma actually fails". In this note we  give an example that shows that the lemma of \cite{IM} is indeed false.
On positive side, we suggest a repair for this lemma so that the arguments of Imrich--M\"uller \cite{IM} could be saved. Our approach seems to be of independent interest and can be outlined as follows. Given subgroups $H, K$ of a free group $F$ with $H  \cap K \ne \{ 1 \}$, we modify $H, K, F$ by certain deformations of Stallings graphs of $H, K, F$ so that the modified subgroups
 $\wtl H, \wtl K$ would satisfy $\brr(\wtl H) = \brr( H)$, $\brr(\wtl K) = \brr( K)$, $\bar \rr(\wtl H  \cap \wtl K) = \bar \rr(H  \cap K)$, and
 $\brr (\langle \wtl  H, \wtl  K \rangle) \ge \brr (\langle \wtl  H, \wtl  K \rangle)$. Furthermore, our modification 
is done  so that Stallings graphs of subgroups $\langle \wtl  H, \wtl  K \rangle$, $\wtl H, \wtl K$  have vertices of degree
2 or 3 only and every vertex of the graph of   $\langle \wtl  H, \wtl  K \rangle$ of degree 3 has a preimage of degree 3
in the graph of $\wtl  H$ or $ \wtl  K$. These properties mean that the lemma of Imrich--M\"uller \cite[p.~195]{IM} holds  in a modified setting and the arguments of Imrich--M\"uller are retained otherwise. As an application of this strategy developed in  Sect.~3, we will prove Theorem~\ref{th1} in Sect.~4.
Theorem~\ref{AB} in Sect.~5 deals with a natural question related to our main construction
used in  proofs of key Lemmas~\ref{ED}--\ref{mn}.

\section{A Counterexample to Imrich--M\"uller Lemma}

Similarly to Stallings \cite{St}, see also \cite{D}, \cite{K}, \cite{KM}, we consider
finite graphs associated with finitely generated subgroups of a free group $F$.
We consider the ambient free group $F$  as the fundamental group of a finite connected graph $U$, $F = \pi_1(U)$, without vertices of degree 1. Let $H, K$ be  finitely generated subgroups of $F$ and let $X, Y$ denote
finite graphs associated with $H, K$, resp. Recall that there are locally injective graph maps $\ph_X : X \to U$,
$\ph_Y : Y \to U$. Conjugating $H, K$ if necessary, we may assume that the graphs $X, Y$ have no vertices of degree 1, i.e.,   $\core (X) = X$ and $\core (Y) = Y$, where $\core(\Gamma )$ is the subgraph of a graph $\Gamma $ consisting of all edges
that can be included into circuits of $\Gamma $.

Let $S(H,K)$ denote a set of representatives of those double cosets $H tK \subseteq F$,  $t \in F$,
that have the property $H t Kt^{-1} \ne \{ 1\}$. Recall that the connected components of the core $W := \core (X \times_U Y)$ of the pullback $X \times_U Y$  of graph maps $\ph_X : X \to U$, $\ph_Y : Y \to U$
are in bijective correspondence with elements of the set $S(H,K)$,  see \cite{D}, \cite{KM},   \cite{WN}. Hence, we can write
\begin{equation*}
W = \bigvee_{t \in S(H,K)} W_t ,
\end{equation*}
where $\bigvee$ denotes a disjoint union. In addition, if $W_t$ is a \cncm\ of $W$ that corresponds to $t \in  S(H,K)$, then
\begin{equation*}
\brr ( H \cap tKt^{-1} ) =   | E W_t | -   | V W_t | ,
\end{equation*}
 where $E \Gamma $ is the set of (nonoriented) edges of  a graph $\Gamma $, $V \Gamma $ is the set of vertices of $\Gamma $ and $|A|$ is the cardinality of a set $A$. Using notation $\brr (\Gamma) := | E \Gamma | -   | V \Gamma |$, we have
 \begin{equation*}
\brr (W) = \sum_{t \in S(H,K) }  \brr(   W_t ) =  \sum_{t \in S(H,K) } \brr ( H \cap tKt^{-1} ) .
\end{equation*}
 For $S_1 \subseteq S(H,K)$, $S_1 \ne  \varnothing$, denoting  $W(S_1) := \bigvee_{s \in S_1} W_s$, we obtain
 \begin{equation}\label{inq2}
\brr (W(S_1) ) = \sum_{s \in S_1 }  \brr(   W_s ) =  \sum_{t \in S_1 } \brr ( H \cap sKs^{-1} ) = \brr(H, K, S_1) .
\end{equation}

Let $\al_X : W(S_1)  \to X$,  $ \al_Y :  W(S_1)  \to Y$ denote the restrictions on  $W(S_1) \subseteq   X \x_U Y$ of the pullback projection maps
$$
\bar \al_X : X \x_U Y \to X ,   \quad  \bar \al_Y :  X \x_U Y \to Y ,
$$
resp. Consider the pushout  $P$ of these maps $\al_X : W(S_1)  \to X$,  $ \al_Y :  W(S_1)  \to Y$. The corresponding
pushout maps are denoted $\beta_X : X \to P$,  $\beta_Y : Y \to P$. We also consider the Stallings graph $Z$ corresponding to the subgroup $\langle H, K, S_1 \rangle$ of $F$. It is clear from the definitions that there are
graph maps $\gamma : P \to Z$ and $\delta : Z \to U$ such that the diagram depicted in Fig.~1 is commutative.

\begin{center}
\begin{tikzpicture}[scale=.57]
\draw [-latex](.5,.5) -- (1.5, 1.5);
\draw [-latex](.5,-.5) -- (1.5, -1.5);
\draw [-latex](2.5,1.5) -- (3.5, 0.5);
\draw [-latex](2.5,-1.5) -- (3.5, -0.5);
\draw [-latex](4.5,0) -- (5.5, 0);
\draw [-latex](6.5,0) -- (7.5, 0);
\node at (-.2,0) {$W(S_1)$};
\node at (2,2) {$X$};
\node at (2,-2) {$Y$};
\node at (.6,1.4) {$\alpha_X$};
\node at (.6,-1.4) {$\alpha_Y$};
\node at (3.5,1.4) {$\beta_X$};
\node at (3.5,-1.4) {$\beta_Y$};
\node at (5.,0.5) {$\gamma$};
\node at (7.,0.5) {$\delta$};
\node at (4,0) {$P$};
\node at (6,0) {$Z$};
\node at (8,0) {$U$};
\node at (4.5,-3) {Figure 1};
\end{tikzpicture}
\end{center}

As was found out by Kent \cite[Fig.~1]{K}, one has to distinguish between $P$ and $Z$ as $P \ne Z$ in general. Since the map $\delta : Z \to U$ is locally injective and $P \ne Z$, we see that
the map $\gamma : P \to Z$  need not be locally injective, hence, $\gamma$ factors out through a sequence of edge
foldings and $\brr(P) \ge \brr(Z)$. According to Kent \cite{K}, the erroneous  identification $P = Z$ is the source of a mistake in a key lemma of  Imrich--M\"uller \cite[p.~195]{IM}. Recall that this lemma claims, in our terminology, that if
$Z$  is an {\em almost trivalent} graph, i.e., every vertex of $Z$ has degree 3 or 2, then every vertex of $Z$ of degree 3, has a preimage of  degree 3 in $X$ or $Y$.  Kent \cite{K} explains in detail a mistake in the proof of this lemma and comments that, while
the proof of lemma of \cite[p.~195]{IM} can be somewhat corrected if one replaces $Z$ by $P$, the subsequent arguments of
Imrich--M\"uller rely on the property that  $Z=U$ is  trivalent, the property that the graph $P$ does not possess.  Furthermore, Kent \cite[p.~312]{K}  points out that the lemma in \cite{IM} ``would be quite useful, and though its proof is incorrect, we do not know if the lemma actually fails".

We now present an example that shows that the lemma of \cite{IM} does fail. Consider subgroups
\begin{align}
H & = \langle b_1b_2 a_2 b_2^{-1} b_1^{-1}, \ a_1^2, \  a_1b_1b_3 a_3  (a_1b_1b_3)^{-1} \rangle , \\
K & = \langle b_1b_2 a_2 b_2^{-1} b_1^{-1}, \ a_1^3,  \  a_1 b_1b_3 a_3 (a_1b_1b_3)^{-1} \rangle
\end{align}
of a free group $F= \pi_1(U)$ whose graphs $ X, Y, Z$, resp., are depicted in Fig.~2, where the base vertices of
$ X, Y, Z$ are dashed circled.

\begin{center}
\begin{tikzpicture}[scale=.57]
\draw  (-2,0)[fill = black]circle (0.06);
\draw  (-1,0)[fill = black]circle (0.06);
\draw  (0,0)[fill = black]circle (0.06);
\draw  (2,0)[fill = black]circle (0.06);
\draw  (3,0)[fill = black]circle (0.06);
\draw  (4,0)[fill = black]circle (0.06);
\draw  (0,0)[dashed]circle (0.2);

\draw[-latex](-3.1,1) -- (-2.95,1);
\draw[-latex](0.95,1) -- (1.1,1);
\draw[-latex](4.95,1) -- (5.1,1);

\draw[-latex](1.1,-1) -- (0.95,-1);
\draw[-latex](-1.5,0) -- (-1.6,0);
\draw[-latex](-0.5,0) -- (-0.6,0);
\draw[-latex](2.5,0) -- (2.6,0);
\draw[-latex](3.5,0) -- (3.6,0);

\draw(-2,0) -- (0,0);
\draw  (-3,0) ellipse (1 and 1);
\draw  (1,0) ellipse (1 and 1);
\draw(2,0) -- (4,0);
\draw  (5,0) ellipse (1 and 1);

\node at (-3,1.5) {$a_2$};
\node at (1,1.5) {$a_1$};
\node at (1,-1.5) {$a_1$};
\node at (5,1.5) {$a_3$};
\node at (-1.5,0.5) {$b_2$};
\node at (-0.5,0.5) {$b_1$};
\node at (2.5,0.5) {$b_1$};
\node at (3.5,0.5) {$b_3$};
\node at (-1.5,-1.5) {$X$};

\begin{scope}[shift={(12,0)}]
\draw  (-2,0)[fill = black]circle (0.06);
\draw  (-1,0)[fill = black]circle (0.06);
\draw  (0,0)[fill = black]circle (0.06);
\draw  (2,0)[fill = black]circle (0.06);
\draw  (3,0)[fill = black]circle (0.06);
\draw  (4,0)[fill = black]circle (0.06);
\draw  (0,0)[dashed]circle (0.2);

\draw[-latex](-3.1,1) -- (-2.95,1);
\draw[-latex](0.95,1) -- (1.1,1);
\draw[-latex](4.95,1) -- (5.1,1);
\draw[-latex](-1.5,0) -- (-1.6,0);
\draw[-latex](-0.5,0) -- (-0.6,0);
\draw[-latex](2.5,0) -- (2.6,0);
\draw[-latex](3.5,0) -- (3.6,0);

\draw  (1,-1)[fill = black]circle (0.06);
\draw[-latex](1.73,-.7) -- (1.6,-.8);
\draw[-latex](.3,-.7) -- (.2,-.6);
\node at (2.1,-1.1) {$a_1$};
\node at (-.1,-1.1) {$a_1$};
\node at (-1.5,-1.5) {$Y$};

\draw(-2,0) -- (0,0);
\draw  (-3,0) ellipse (1 and 1);
\draw  (1,0) ellipse (1 and 1);
\draw(2,0) -- (4,0);
\draw  (5,0) ellipse (1 and 1);

\node at (-3,1.5) {$a_2$};
\node at (1,1.5) {$a_1$};

\node at (5,1.5) {$a_3$};
\node at (-1.5,0.5) {$b_2$};
\node at (-0.5,0.5) {$b_1$};
\node at (2.5,0.5) {$b_1$};
\node at (3.5,0.5) {$b_3$};
\end{scope}

\draw  (5,-4) ellipse (1 and 1);
\draw  (9,-4) ellipse (1 and 1);
\draw  (7,-8) ellipse (1 and 1);

\draw  (1,0) ellipse (1 and 1);
\draw(7.,-7) -- (7,-5.5);
\draw (5.7,-4.7) -- (7,-5.5);
\draw (8.3,-4.7) -- (7,-5.5);

\draw[-latex](7,-6.3) -- (7,-6.1);

\draw[-latex](6.38,-5.12)--(6.18,-5.0);
\draw[-latex](7.65,-5.12)--(7.8,-5.04);

\draw  (5,0) ellipse (1 and 1);
\draw  (7,-5.5)[fill = black]circle (0.06);
\draw  (5.7,-4.7)[fill = black]circle (0.06);
\draw  (8.3,-4.7)[fill = black]circle (0.06);
\draw  (7,-7)[fill = black]circle (0.06);
\draw  (7,-7)[fill = black]circle (0.06);
\draw  (7,-7)[dashed]circle (0.2);
\draw(-2,0) -- (0,0);

\draw[-latex](4.95,-3) -- (5.1,-3);

\draw[-latex](8.95,-3) -- (9.1,-3);

\draw[-latex](6,-8.1) -- (6,-7.9);
\node at (5,-2.5) {$a_2$};
\node at (9,-2.5) {$a_3$};
\node at (5.5,-8) {$a_1$};
\node at (6,-5.5) {$b_2$};
\node at (8,-5.5) {$b_3$};
\node at (7.5,-6.2) {$b_1$};
\node at (4.5,-6.5) {$Z$};

\node at (7,-10.25) {Figure 2};
\end{tikzpicture}
\end{center}

\noindent
It is easy to check that $S(H,K)$ has a single element, say $S(H,K) = \{ 1\}$, and
$$
H \cap K = \langle b_1b_2 a_2 b_2^{-1} b_1^{-1}, a_1^6,  a_1b_1b_3 a_3 (a_1b_1b_3)^{-1}  \rangle .
$$
Furthermore,  the graphs $W, P$ look like those in Fig.~3.  Hence, we can see that $Z$ is a trivalent graph that  has a vertex of degree 3, which is the center of $Z$, without a preimage of degree 3 in $X \vee Y$, as desired.

\begin{center}
\begin{tikzpicture}[scale=.52]
\draw  (-2,0)[fill = black]circle (0.06);
\draw  (-1,0)[fill = black]circle (0.06);
\draw  (0,0)[fill = black]circle (0.06);
\draw  (2,0)[fill = black]circle (0.06);
\draw  (3,0)[fill = black]circle (0.06);
\draw  (4,0)[fill = black]circle (0.06);
\draw  (0,0)[dashed]circle (0.2);

\draw[-latex](-3.1,1) -- (-2.95,1);
\draw[-latex](0.95,1) -- (1.1,1);
\draw[-latex](4.95,1) -- (5.1,1);

\draw[-latex](1.1,-1) -- (0.95,-1);
\draw[-latex](-1.5,0) -- (-1.6,0);
\draw[-latex](-0.5,0) -- (-0.6,0);
\draw[-latex](2.5,0) -- (2.6,0);
\draw[-latex](3.5,0) -- (3.6,0);

\draw(-2,0) -- (0,0);
\draw  (-3,0) ellipse (1 and 1);
\draw  (1,0) ellipse (1 and 1);
\draw(2,0) -- (4,0);
\draw  (5,0) ellipse (1 and 1);

\node at (-3,1.5) {$a_2$};
\node at (1,1.5) {$a_1$};
\node at (1,-1.5) {$a_1^5$};
\node at (5,1.5) {$a_3$};
\node at (-1.5,0.5) {$b_2$};
\node at (-0.5,0.5) {$b_1$};
\node at (2.5,0.5) {$b_1$};
\node at (3.5,0.5) {$b_3$};
\node at (-1.5,-1.5) {$W$};

\draw  (9.5,1) ellipse (1 and 1);
\draw  (13.5,1) ellipse (1 and 1);
\draw  (11.5,-3) ellipse (1 and 1);

\draw  (1,0) ellipse (1 and 1);
\draw(11.5,-2) --  (10.2,0.3);
\draw(11.5,-2) -- (12.8,0.3);

\draw[-latex](11.2,-1.5) -- (11.1,-1.3);
\draw[-latex](11.8,-1.5) -- (11.9,-1.3);

\draw[-latex](10.57,-0.4)--(10.48,-.2);
\draw[-latex](12.4,-0.4)--(12.52,-0.2);

\draw  (5,0) ellipse (1 and 1);
\draw  (10.85,-0.88)[fill = black]circle (0.06);
\draw  (12.15,-0.88)[fill = black]circle (0.06);

\draw  (10.2,0.3)[fill = black]circle (0.06);
\draw  (12.8,0.3)[fill = black]circle (0.06);
\draw  (11.5,-2)[fill = black]circle (0.06);
\draw  (11.5,-2)[fill = black]circle (0.06);
\draw  (11.5,-2)[dashed]circle (0.2);
\draw(-2,0) -- (0,0);

\draw[-latex](9.45,2) -- (9.6,2);

\draw[-latex](13.45,2) -- (13.6,2);

\draw[-latex](10.5,-3.1) -- (10.5,-2.9);
\node at (9.5,2.5) {$a_2$};
\node at (13.5,2.5) {$a_3$};
\node at (10,-3) {$a_1$};
\node at (10.,-0.5) {$b_2$};
\node at (13,-0.5) {$b_3$};
\node at (12.5,-1.6) {$b_1$};
\node at (10.5,-1.6) {$b_1$};
\node at (9,-1.5) {$P$};

\node at (6.5,-4) {Figure 3};
\end{tikzpicture}
\end{center}

\section{Fixing the Lemma of Imrich and M\"uller}

We now  discuss how to do certain deformations over the graphs $W(S), X$, $Y, P$, $Z, U$ to achieve the situation when
$P = Z =U$,  $U$ is almost trivalent and lemma of \cite[p.~195]{IM} would hold for $Z$.

Our idea could be illustrated by the remark that, when studying graphs  $W(S)$, $X$, $Y, P$, $Z, U$, or corresponding subgroups, we can replace the ambient group $F = \pi_1(U)$ by $\wht F = \pi_1(Z)$, i.e., we can  replace the graph $U$ by $Z$.
We can go further and replace the group $F = \pi_1(U)$ with $\wtl F = \pi_1(P)$ by using the pushout graph $P$ in place of $U$.

Either of these replacements $U \to Z$, $U \to P$  results in obvious cosmetic changes  to subgroups $H, K$, to sets $S(H,K)$, $S_1 \subseteq S(H,K)$, and to subgroups $ H \cap sKs^{-1}$, $s \in S_1$, $\langle  H, K, S_1 \rangle$.

Either of these replacements $U \to Z$, \ $U \to P$ preserves the graphs $W(S)$, $X, Y$, $P$ and the maps
$\al_X, \al_Y, \beta_X, \beta_Y$.  The replacement  $U \to Z$ turns $\delta$ into the identity map and the replacement
$U \to P$ turns both $\delta, \gamma$ into the identity maps. Otherwise, the structure of the diagram depicted on Fig.~1 is retained. In particular, the ranks $\brr(H), \brr(K)$, $\brr(W(S))$ do not change but the rank $\brr(P)$ could increase
as $\brr(P) \ge \brr(Z)$ originally.

For future references, we record this replacement in the following.

\begin{lemma}\label{rpl}    Replacing the graph  $U$ in the diagram depicted in Fig.~1 by $P$ and
changing the maps $\gamma : P \to Z$, $\delta: Z \to U$ by $\gamma = \delta = \mbox{id}_{U}$, resp., and
changing the maps $\ph_X : X \to U$, $\ph_Y : Y \to U$ by $\beta_X : X \to P$, $\beta_Y : Y \to P$, resp.,
preserve the properties that $W(S_1) = \bigvee_{s \in S_1}  W_{s}$  consists of connected components of
$\core(X \x_{U} Y)$ and that $P = X \vee_{W(S_1)} Y$.  In particular, this replacement does not
change the ranks $\brr(X), \brr(Y), \brr(W(S_1))$ but could increase  $\brr(Z)$.
\end{lemma}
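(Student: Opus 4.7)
The plan is to verify the two stated properties directly, exploiting the observation that replacing $U$ by $P$ leaves the graphs $X, Y, W(S_1)$ and the maps $\al_X, \al_Y$ untouched; only the ambient graph and the composition factors $\delta, \gamma$ change. Consequently, the pushout statement $P = X \vee_{W(S_1)} Y$ is immediate, since the pushout is defined entirely by $\al_X, \al_Y$ and does not see the ambient graph. The real work lies in verifying that $W(S_1)$ remains a disjoint union of connected components of the core of the new pullback $X \x_P Y$.

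I would proceed in three short steps. First, I check that $\beta_X : X \to P$ and $\beta_Y : Y \to P$ are locally injective: if two distinct edges $e_1 \ne e_2$ at a common vertex of $X$ had equal $\beta_X$-images, then applying $\delta\gamma$ would give $\ph_X(e_1) = \ph_X(e_2)$, contradicting local injectivity of $\ph_X$; the same argument handles $\beta_Y$. Second, the factorizations $\ph_X = \delta\gamma\beta_X$ and $\ph_Y = \delta\gamma\beta_Y$ yield a subgraph inclusion $X \x_P Y \subseteq X \x_U Y$: indeed, $\beta_X(e) = \beta_Y(f)$ implies $\ph_X(e) = \delta\gamma\beta_X(e) = \delta\gamma\beta_Y(f) = \ph_Y(f)$. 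At the same time, the pushout relation $\beta_X\al_X = \beta_Y\al_Y$ produces an injection $W(S_1) \hookrightarrow X \x_P Y$ sending $w \mapsto (\al_X(w), \al_Y(w))$. Thus $W(S_1) \subseteq X \x_P Y \subseteq X \x_U Y$ as subgraphs.

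For the third step, I argue that each $W_s \subseteq W(S_1)$ remains a connected component of $\core(X \x_P Y)$. On one hand, every edge of $W_s$ already lies on a circuit inside $W_s \subseteq X \x_P Y$, so $W_s \subseteq \core(X \x_P Y)$. On the other hand, any edge of $\core(X \x_P Y)$ adjacent to $W_s$ lies on a circuit in $X \x_P Y \subseteq X \x_U Y$, and hence belongs to $\core(X \x_U Y) = W$; since $W_s$ is a full connected component of $W$, such an adjacent edge must in fact lie in $W_s$. Therefore each $W_s$ is a connected component of $\core(X \x_P Y)$, and the first property is preserved. The rank assertions are then automatic: $\brr(X), \brr(Y), \brr(W(S_1))$ are intrinsic invariants of the graphs and so are unaffected, while after the replacement the new Stallings graph $Z$ coincides with $P$ (both $\gamma$ and $\delta$ becoming identities), yielding $\brr(Z) = \brr(P) \ge \brr(Z_{\mathrm{old}})$ as already noted in the discussion preceding the lemma. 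I anticipate no serious obstacle beyond keeping straight that the new pullback is a \emph{subgraph} of the old, not the reverse.
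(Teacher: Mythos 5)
Your proposal is correct and follows essentially the same route as the paper: both arguments hinge on the observation that the pushout identifications force every edge $(e_1,e_2)$ of $W(S_1)$ to be an edge of the new pullback $X \times_P Y$, and then use $\core(W(S_1))=W(S_1)$ to place $W(S_1)$ inside $\core(X\times_P Y)$, with the rank claims following from $\brr(P)\ge\brr(Z)$. Your additional step --- the inclusion $X\times_P Y\subseteq X\times_U Y$ via the factorizations $\ph_X=\delta\gamma\beta_X$, $\ph_Y=\delta\gamma\beta_Y$, used to show that no new core edge can attach to a $W_s$ --- is a detail the paper leaves implicit but which is genuinely needed to conclude that the $W_s$ remain full connected components rather than mere subgraphs of $\core(X\times_P Y)$, so it is a welcome addition.
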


\begin{proof}  If $(e_1, e_2) $ is an edge of $W(S_1) \subseteq  \core(X \x_{U} Y)$, where $\ph_X \al_X((e_1, e_2)) = e_1$ is an edge of $X$ and  $\ph_Y \al_Y((e_1, e_2)) = e_2$ is an edge of $Y$, then the edges $e_1, e_2$ are identified in $P$, whence, $(e_1, e_2) $ is also an edge of  $X \x_{P} Y$. Since  $\core (W(S_1) ) = W(S_1)$, it follows that
$\core(X \x_{P} Y)$ contains all edges $(e_1, e_2) $ of $W(S_1)$, hence,  $\core(X \x_{P} Y)$ will contain the graphs naturally isomorphic to \cncm s of the graph $W(S_1) =  \bigvee_{s\in S_1} W_s$. Therefore, the original pushout
$P = X \vee_{W(S_1)} Y$ will also be preserved.

Since $\brr(P) \ge \brr(Z)$ for the original graphs $P, Z$ and since $P = Z = U$ after the replacement $U \to P$, we see that the rank $\brr(Z)$ could increase after the replacement.
\end{proof}

Suppose that $v$ is a vertex of $U$. Subdividing the edges incident to $v$ if necessary, we may assume that
every oriented edge $e$ of $U$ whose terminal vertex, denoted $e_+$, is $e_+ = v$ has the initial vertex, denoted $e_-$, of degree 2, $\deg e_- = 2$. Hence, we may assume that $U$ contains a subgraph $\St(v)$ isomorphic to a star
with $\deg v$ rays whose center is $v$. Let $T$ be a tree whose vertices of degree 1 are in bijective correspondence with
vertices of degree 1 of $\St(v)$ whose set we denote by $V_1 \St(v)$. Taking $\St(v)$ out of $U$ and putting the tree $T$ in place of $\St(v)$, using the bijective correspondence to identify vertices of degree 1 of $(U \setminus \St(v)) \cup V_1\St(v)$ and those of $T$, results in a transformation of $U$ which we call an {\em elementary deformation} of $U$ around $v$ by means of $T$.
It is clear that the obtained graph $U_T : = (U \setminus \St(v))\cup T$ is homotopically equivalent to $U$ and
$\pi_1(U_T)$ is isomorphic to $F = \pi_1(U)$.  We lift this \ED\ of the star around $v$ in $U$ to all the graphs $X, Y, W, Z$ by  replacement of stars around  preimages of the vertex $v$ by trees isomorphic to  suitable subtrees of $T$ and change, accordingly, the maps $\al_X, \al_Y$,  $\beta_X, \beta_Y$,
$\ph_X, \ph_Y$. The new graphs and maps obtained this way we denote by $X_T, Y_T$, $W(S_1)_T$, $P_{T},  Z_T$,
$\al_{X_T},  \al_{Y_T}$,  $\beta_{X_T}, \beta_{Y_T}$,
$\ph_{X_T}, \ph_{Y_T}$, $\gamma_T, \delta_T$.

Clearly, $\brr(Q_T) = \brr(Q)$, where $Q \in \{ X, Y, W, Z, U\}$.
As far as  the new pushout graph $P_T$ is concerned,  we can only claim that  $\brr(P_{T}) \ge \brr(P)$
which would be analogous to the following.

\begin{lemma}\label{ZT}  Let $P = Z = U$, let $\gamma$, $\delta$ be identity maps and let the graphs
 $X_T, \ldots$,  $U_T$ be obtained from  $X, \ldots$,  $U$  by an \ED\ around a vertex $v \in VU$ by means of a tree $T$.
 Then the map $\delta_T : Z_T \to U_T$  is an isomorphism, i.e., the graphs
$Z_T, U_T$ are naturally isomorphic. Furthermore, the restriction of the map
 $\delta_T\gamma_T : P_{T} \to U_T$ is bijective on the set $ \gamma_T^{-1}\delta_T^{-1} (U_T \setminus T)$ and the subgraph $ \widehat T :=   \gamma_T^{-1}\delta_T^{-1} (T)$ of $P_{T}$ is connected. In particular, $\brr(P_{T}) \ge \brr(P)$  and the equality holds if and only if $ \widehat T$ is a tree.
\end{lemma}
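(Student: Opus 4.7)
The plan is to exploit the locality of the elementary deformation at $v$: outside $\St(v) \subset U$ and its preimages in $X, Y, W(S_1), Z$, nothing changes, so the diagram of Fig.~1 is preserved away from those regions and all the content lies inside $T$ and its preimages in $P_T$.

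For the first two claims, observe that since $Z = U$ and $\delta = \mathrm{id}_U$, the unique preimage of $v$ in $Z$ is $v$ itself with $\St_Z(v) = \St_U(v)$, so the subtree of $T$ that replaces it is $T$ itself; this gives $Z_T = U_T$ canonically and $\delta_T = \mathrm{id}_{U_T}$. Likewise, because $P = U$ and $\gamma = \mathrm{id}_U$, the subgraph $U \setminus \St(v)$ together with the buffer vertices $V_1\St(v)$ is fixed by the deformation, each buffer vertex retains a unique preimage in $P_T$, and therefore $\delta_T\gamma_T$ is a bijection on $\gamma_T^{-1}\delta_T^{-1}(U_T \setminus T)$.

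The main step is to show that $\widehat{T}$ is connected. The lifted deformation replaces each preimage $v_i^X$ of $v$ in $X$ by a non-trivial subtree $T_i^X$ of $T$ (non-triviality uses $\core X = X$, hence $\deg v_i^X \ge 2$), similarly each $v_j^Y$ by $T_j^Y$, and each preimage $w_k$ of $v$ in $W(S_1)$ by a non-trivial subtree $T_k^W$ whose images under $\al_{X_T}, \al_{Y_T}$ are isomorphic to $T_k^W$ and lie inside $T_{i(k)}^X$ and $T_{j(k)}^Y$, respectively. Hence in $P_T$ each pair $T_{i(k)}^X, T_{j(k)}^Y$ is glued along the non-empty connected subgraph $T_k^W$. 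Now the original equality $P = U$ forces the pushout equivalence to identify every $v_i^X$ and $v_j^Y$ with the single vertex $v$; since this equivalence is generated by the relations $\al_X(w_k) \sim \al_Y(w_k)$, the meta-graph with vertex set $\{T_i^X\} \cup \{T_j^Y\}$ and edge set $\{T_k^W\}$ is connected. Therefore the union of all such subtrees is connected in $P_T$; every buffer vertex lies in this union since each edge of $\St(v)$ is in the image of $\ph_X$ or $\ph_Y$ (again by $P=U$), so some subtree has it as a leaf. This exhausts $\widehat{T}$, proving connectedness.

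For the final claim, decomposing $P_T$ as the union of $\widehat{T}$ and the canonical copy of $P$ with $v$ and its incident edges removed, with intersection the set $V_1 T$ of buffer vertices and no shared edges, a direct count of vertices and edges yields
\begin{equation*}
\brr(P_T) = \brr(\widehat{T}) + \brr(P) + 1 .
\end{equation*}
Connectedness of $\widehat{T}$ gives $\brr(\widehat{T}) \ge -1$ with equality iff $\widehat{T}$ is a tree, which is the final claim. The crux of the argument is the connectedness step: one must translate the original identification $P = U$ into connectivity of the meta-graph, and then use the core hypothesis to ensure that each meta-edge gluing is along a non-empty subtree --- without this, connectedness of the meta-graph alone would not suffice.
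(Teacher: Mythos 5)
Your proof is correct and follows essentially the same route as the paper's: locality of the deformation gives $Z_T=U_T$ and the bijectivity off $T$, connectedness of $\widehat T$ is forced by $P=U$, and the rank inequality follows from the decomposition of $P_T$ along the buffer vertices $V_1T$. The only presentational difference is in the connectedness step: the paper argues in one line that collapsing the lifts of the edges of $T$ recovers $P$, so the preimage of the single vertex $v$ must be connected, whereas you unpack the same fact by tracing the pushout equivalence classes through a meta-graph on the subtrees $T_i^X$, $T_j^Y$ glued along the nonempty $T_k^W$ --- which is in effect the graph $\Psi(\{A,B\})$ the paper only introduces later in Section~5; your explicit computation $\brr(P_T)=\brr(\widehat T)+\brr(P)+1$ likewise makes precise what the paper leaves implicit.
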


\begin{proof}  It follows from the definitions that if we collapse edges of $T \subset U_T$ into a point then we obtain the  graph $U$ back. Similarly, collapsing lifts of the edges of $T$ in $ Q_T$, $Q \in \{ X, Y, W(S_1), P \}$, into points, we obtain the original graph $Q$. This observation, together with the local injectivity of the map $\delta_T$, implies that
$Z_T = U_T$ and that the graph $ \widehat T :=   \gamma_T^{-1}\delta_T^{-1} (T)$ is connected.

Furthermore, the graph  $P_{T}$ consists of a subgraph isomorphic to $ (U_T \setminus T ) \cup V_1T   $, where $V_1T $ is the set of vertices of $T$ of degree 1,  along with the graph $\widehat T=   \gamma_T^{-1}\delta_T^{-1} (T)$ which is mapped by $ \delta_T\gamma_T$ to $T$. Surjectivity of the restriction of $\delta_T\gamma_T$ on $\widehat T$ follows from connectedness of $\widehat T$.
\end{proof}

The graph $\widehat T$ of Lemma~\ref{ZT} can be regarded as a ``blow-up" of the vertex $v \in VU$.  If   $\widehat    T$ turns out to be a tree, then  our attempt to nontrivially ``blow up" the vertex $v$ is unsuccessful. On the other hand, if  $\widehat   T $ is not a tree,  then we can  increase $\brr (U)$ by invoking  Lemma~\ref{rpl} and picking $P_{T}$ in place of  $U$.

It is of interest to note that even when $P = Z = U$ and $\brr(P_{T}) = \brr(P)$, i.e., $ \widehat T$ is a tree,
the tree $ \widehat T$ might look different from  $T$, in this connection, see Lemma~\ref{ED} and its proof.

The following is due to Kent \cite{K}. We provide  a proof for completeness.

\begin{lemma}\label{knt} Every vertex  of $P$ of degree at least $3$ has a preimage in $X \vee Y$  of degree at least $3$. 
\end{lemma}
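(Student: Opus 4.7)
The plan is to prove the contrapositive: assume every preimage of $p \in VP$ in $X \vee Y$ has degree at most $2$, and deduce $\deg_P(p) \le 2$. Write $A := \beta_X^{-1}(p) \sbs VX$ and $B := \beta_Y^{-1}(p) \sbs VY$.

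First I would record the key local constraint imposed by $W(S_1) \sbs \core(X \x_U Y)$. Since $W(S_1)$ is a union of core components, every $x \in VW(S_1)$ satisfies $\deg_{W(S_1)}(x) \ge 2$; since $\alpha_X, \alpha_Y$ are restrictions of the pullback projections (which are graph immersions as $\ph_X, \ph_Y$ are), we also have $\deg_{W(S_1)}(x) \le \min(\deg_X(\alpha_X(x)), \deg_Y(\alpha_Y(x)))$. Hence any preimage of $p$ that is identified, via the pushout relation, with another preimage must have degree exactly $2$. The case $|A \cup B| = 1$ is immediate: no half-edges get identified and $\deg_P(p)$ is the degree of the unique preimage. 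In the remaining case $|A \cup B| \ge 2$, every vertex of $A \cup B$ has degree exactly $2$ and every $x \in VW(S_1)$ lying over $p$ satisfies $\deg_{W(S_1)}(x) = 2$.

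Next I would introduce two auxiliary graphs. Let $\Gamma$ be the bipartite graph with vertex set $A \sqcup B$ and one edge for each $x \in VW(S_1)$ lying over $p$, joining $\alpha_X(x) \in A$ to $\alpha_Y(x) \in B$; connectedness of the pushout equivalence class forces $\Gamma$ to be connected. Let $\Delta$ be the graph whose vertex set consists of the half-edges at vertices of $A$ in $X$ together with the half-edges at vertices of $B$ in $Y$, and whose edges are the half-edges of $W(S_1)$ at vertices over $p$, where a half-edge $H$ at $x \in VW(S_1)$ joins $\alpha_X(H)$ to $\alpha_Y(H)$. A direct inspection of the pushout identifications on oriented edges shows $\deg_P(p)$ equals the number of connected components of $\Delta$.

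The main step is to verify that the obvious projection $\Delta \to \Gamma$ (sending a half-edge to its base vertex and an identifying $W$-half-edge to its $W$-vertex) is a $2$-fold covering map of graphs. Each $v \in V\Gamma$ has exactly two lifts in $\Delta$, namely its two half-edges; each edge $x \in E\Gamma$ has exactly two lifts, namely the two half-edges of $x$ in $W(S_1)$; and the local bijection at $v$ holds because the two half-edges of any such $x$ with $\alpha_X(x) = v$ map under the locally injective $\alpha_X$ bijectively onto the two half-edges at $v$ (similarly on the $Y$-side). Since a $2$-fold cover of a connected graph has at most two connected components, we conclude $\deg_P(p) \le 2$, as required. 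I expect the main obstacle to be verifying the covering property cleanly, especially the local bijection at each vertex, together with the identification of $\deg_P(p)$ with the number of components of $\Delta$.
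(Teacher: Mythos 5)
Your argument is correct and rests on exactly the observation the paper uses: because $W(S_1)$ is a core graph and the pullback projections are immersions, every vertex of $W(S_1)$ lying over $p$ has degree exactly $2$, so the pushout glues degree-$2$ vertices of $X \vee Y$ along degree-$2$ identification loci and cannot produce a vertex of degree greater than $2$. The paper carries this out by chaining two arbitrary lifts $v, v'$ through degree-$2$ vertices of $W$ and asserting the resulting degree count as clear from the definitions, while your $2$-fold covering $\Delta \to \Gamma$ makes that final half-edge count precise; this is a tightened write-up of essentially the same proof rather than a different route.
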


\begin{proof}  Arguing on the contrary, assume that $u \in V P$ has degree $\deg u >2$ and every  lift  $v \in VX \vee VY$ of $u$ has degree 2. Let $v, v' \in VX \vee VY$ be two arbitrary lifts of $u$. It follows from the definitions of  $ P, W(S_1)$ that there is a sequence of vertices  $v_1 = v , \dots,  v_k = v' $ in $VX \vee VY$ with the following properties:

(a) The vertices $v_1 = v , \dots,  v_k = v' $ are mapped by
$\beta_X,  \beta_Y$ to $u$ and have degree 2;

(b)  For every $i =1, \dots, k-1$, the vertices $v_i$, $v_{i+1}$ belong to distinct connected components of $X \vee Y$;

(c)  For every $i =1, \dots, k-1$,  there is a vertex $w_i \in VW$ of degree 2 such that either $\alpha_X(w_i) = v_i$,  $\alpha_Y(w_i) = v_{i+1}$ if $v_i \in VX$ or $\alpha_Y(w_i) = v_i$,  $\alpha_X(w_i) = v_{i+1}$ if $v_i \in VY$.

It is clear from these properties and from the definitions of the graphs  $ P, W(S_1)$ that the vertices $v_1 = v , \dots,  v_k = v' \in VX \vee VY $ will be identified in the pushout $P$ so that the resulting vertex will have degree 2. Since $v, v' \in VX \vee VY$ were chosen arbitrarily, it follows that the degree of $u \in V P $ is also 2. This contradiction to  $\deg u >2$ proves our claim.
\end{proof}

We say that $T$ is a {\em trivalent tree} if every vertex of $T$ has degree 1 or 3.

\begin{lemma}\label{ED}   Suppose that $P = Z = U$ and $v_0$ is a vertex of $U$ of degree at least 4.
Then there is an \ED\ of $U$ around $v_0$     by means of a trivalent tree $T$ such that either  $\brr(P_{T}) > \brr(P)$
or  the following are true: $\brr(P_{T}) = \brr(P)$, the subgraph $ \widehat T =   \gamma_T^{-1}\delta_T^{-1} (T)$ of $P_{T}$ is a tree
and
$$
\sum_{v \in V P_{T} } \max(\deg v -3, 0) < \sum_{v \in V P } \max(\deg v -3, 0) .
$$
\end{lemma}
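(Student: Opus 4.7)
My plan is to construct $T$ by pairing two edges of $\St(v_0)$ that share a common preimage in $X\vee Y$. Since $\deg v_0 \ge 4 > 2$, Lemma~\ref{knt} furnishes a preimage $\tilde v \in V(X\vee Y)$ of $v_0$ with $\deg \tilde v \ge 3$, so at least two distinct edges $e_1, e_2 \in E\St(v_0)$ both lift to edges incident to $\tilde v$. Take $T$ to be a trivalent tree with $d := \deg v_0$ leaves bijective with $E\St(v_0)$ in which $e_1, e_2$ form a \emph{cherry} at a common internal parent $w_0 \in VT$ (the remainder of $T$ can be completed arbitrarily into a trivalent binary tree), and perform the corresponding elementary deformation.

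By Lemma~\ref{ZT}, $\brr(P_T) \ge \brr(P)$. If strict, we are in the first alternative and done. Otherwise $\widehat T$ is a tree and $\brr(P_T) = \brr(P)$, so only the defect inequality remains. The decomposition of $P_T$ in the proof of Lemma~\ref{ZT} identifies $P_T \setminus \widehat T$ with $U_T \setminus T \cong U \setminus \St(v_0) \cong P \setminus \{v_0\}$ degree-preservingly, hence the inequality reduces to
$$\sum_{v \in V\widehat T} \max(\deg_{P_T} v - 3, 0) \;<\; d - 3. $$
I then split $V\widehat T$ into boundary vertices $B$ (preimages of $V_1 T$, with $|B| = d$ and $\deg_{P_T}\ell = \deg_{\widehat T}\ell + 1$, as each $\ell$ carries exactly one edge leaving $\widehat T$) and internal vertices $I$ (preimages of $VT \setminus V_1 T$, with $\deg_{P_T}w = \deg_{\widehat T}w$). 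The tree handshake $\sum_{v}(\deg_{\widehat T} v - 2) = -2$ together with the pointwise identities $\max(k-3,0) = (k-2) - \mathbf{1}_{k\ge 3}$ (for $k \ge 2$) and $\max(k-2,0) = (k-1) - \mathbf{1}_{k\ge 2}$ (for $k \ge 1$) reduce the required inequality to
$$\#\{\ell \in B : \deg_{\widehat T}\ell \ge 2\} \;+\; \#\{w \in I : \deg_{\widehat T}w \ge 3\} \;\ge\; 2. $$

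The cherry at once supplies one qualifying vertex: the preimage of $w_0$ lying in the subtree $T_{\tilde v} \subset T$ that replaces $\St(\tilde v)$ in $X_T \vee Y_T$ has local degree $3$ there (two edges to the cherry leaves $\ell_1, \ell_2$ plus one edge toward the remaining leaves of $T_{\tilde v}$, which exist since $\deg \tilde v \ge 3$ and can only be reached from $\ell_1, \ell_2$ through the neighbor $w_1$ of $w_0$ in $T$), so its image in $\widehat T$ lies in $I$ with $\widehat T$-degree $\ge 3$. The step I expect to be the main obstacle is extracting a \emph{second} qualifying vertex. If no other internal vertex of $\widehat T$ had degree $\ge 3$ and every boundary vertex had $\widehat T$-degree $1$, then $\widehat T$ would be a spider with a unique branch point of degree $d$ and $d$ arms to the leaves; yet $\widehat T$ must surject (by Lemma~\ref{ZT}) onto the trivalent tree $T$, which has $d - 2 \ge 2$ internal vertices, so the folding from the $T_v$'s onto $\widehat T$ is forced to collapse independent $T$-internal vertices. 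A careful accounting of how the subtrees $T_v$ glue in the pushout $P_T = X_T \vee_{W(S_1)_T} Y_T$ then shows this collapse must create either a second high-degree internal vertex of $\widehat T$ or some leaf $\ell \in V_1T$ covered by two unmerged lifts, contradicting the assumption in either case.
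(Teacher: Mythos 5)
There is a genuine gap, and it is located exactly where you flagged it: the ``careful accounting'' needed to extract a second qualifying vertex cannot be carried out, because the statement you are trying to prove for your particular tree is false. Your construction fixes \emph{one} trivalent tree $T$ in advance (a cherry on two edges $e_1,e_2$ sharing a high-degree preimage $\tilde v$) and asserts that this single $T$ must realize one of the two alternatives. Here is a concrete configuration where it does not. Let $U$ have two vertices $v_0,v_1$ joined by four edges $p,q,r,s$, let $X\subset U$ be the subgraph spanned by $p,q,r$, let $Y\subset U$ be the subgraph spanned by $p,q,s$, and let $W(S_1)=\core(X\x_U Y)$ be the circuit spanned by $p,q$. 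Then $P=Z=U$, $\deg v_0=4$, the $v_0$-preimages are $x_1$ with $D(x_1)=\{p,q,r\}$, $y_1$ with $D(y_1)=\{p,q,s\}$, and the single $W$-vertex has $D=\{p,q\}$. Your recipe allows $\tilde v=x_1$ with the cherry on $\{p,q\}$. For that $T$ the pieces $M_T(D(x_1))$ and $M_T(D(y_1))$ are glued precisely along the cherry $u_p - w_{pq} - u_q$, so $\widehat T$ is a tree (hence $\brr(P_T)=\brr(P)$) which is a star: its center has degree $4$ and all other vertices have $P_T$-degree at most $3$. The defect sum is unchanged, so \emph{neither} alternative of the lemma holds for your $T$. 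Your proposed contradiction does not fire: $\widehat T$ does surject onto $T$ (the two unmerged lifts of the internal vertex $w_{rs}$ lie on the arms with degree $2$), there is no second internal vertex of degree $\ge 3$, and no leaf of $T$ acquires two lifts. In this example the cherry on $\{p,r\}$ or on $\{q,r\}$ \emph{does} work, which shows the defect is not in the lemma but in fixing the pair a priori.

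This is why the paper's proof is structured as a contradiction over \emph{all} trivalent trees rather than a direct construction: assuming every $T$ fails (every $\widehat T$ is a tree homeomorphic to a star with a degree-$m_0$ vertex), it first deduces that every preimage of $v_0$ of degree $\ge 3$ has degree exactly $3$, and then plays the three cherry trees $T_{12},T_{13},T_{23}$ against one another to show that every preimage's edge-set lies entirely in $\{e_1,e_2,e_3\}$ or entirely in $\{e_4,\dots,e_{m_0}\}$ — contradicting the fact that the pushout identifies all of $e_1,\dots,e_{m_0}$ at the single vertex $v_0$. The conclusion is only that \emph{some} trivalent tree works (Lemma~\ref{mn} accordingly searches over all of them), not that a canonically chosen one does. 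Your bookkeeping reduction of the defect inequality to ``at least two qualifying vertices of $\widehat T$'' is correct and the cherry does supply one such vertex, but to finish you would have to either adopt the paper's all-trees contradiction scheme or prove that at least one of the three pairings of the three edges at a degree-$3$ preimage yields a second qualifying vertex — a claim that again requires the global connectivity argument through $W(S_1)$, not just local degree counting at $\tilde v$.
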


\begin{proof}  Denote $\deg v_0 = m_0 >3$. Let $e_1, \dots, e_{m_0}$ be all of the oriented edges of $U$ that end in $v_0$.
By Lemma~\ref{knt}, there is a vertex $v_1 \in VX \vee VY$, say $v_1 \in VX$, such that $\beta_X(v_1) = v_0$ and $\deg v_1 = m_1$, where $3 \le m_1 \le m_0$.   Reindexing if necessary, we may assume that if $f_1, \dots , f_{m_1} $ are all of the edges of  $X$ that end in $v_1$, then $\beta_X(f_i) = e_i$, $i =1, \dots, m_1$.

Let $T$ be a trivalent tree with $m_0$ vertices of degree 1.
Let $U_T$ denote  the graph obtained from $U$ by an \ED\ around $v_0$ by means of $T$.

Let $u_i$ denote the vertex of $T$ which becomes $(e_i)_-$ in $U_T$,   $i =1, \dots, m_0$.
Here we are assuming that $\deg(e_i)_- = 2$ for every $i$ as in the definition of an \ED\ around $v$.
It follows from Lemma~\ref{ZT} that if  the subgraph $\widehat T = \gamma_T^{-1} \delta_T^{-1}(T)  $  is not a tree for some $T$, that is, if  $\brr(\widehat T) \ge 0$,  then $\brr(\widehat Z_T) > \brr(\widehat Z)$ and our lemma is proven. Hence, we may assume that   $\widehat T $ is a tree for every trivalent tree $T$.

Suppose that $\widehat T $  contains no vertex of degree $m_0$ for some $T$. Then, obviously,
$$
\sum_{v \in V U_T} \max( \deg v -3, 0)  < \sum_{v \in VU} \max( \deg v -3, 0)
$$
and our lemma is true.

Thus we may suppose that, for every trivalent  tree $T$, $\widehat T$ is a tree which contains a vertex $\widehat u$  of degree $m_0$, i.e., $\widehat T$ is homeomorphic to a star.

 It follows from the definitions of the graphs $P_{T},  \widehat T$ that the minimal subtree $S$ of $T$ that contains the vertices $u_1, \dots, u_{m_1}$  (recall $u_i = (e_i)_-$ in $P_{T}$) is isomorphic to
 a subtree  $\widehat S$  of  $\widehat T$.
Indeed, a copy of $S$ will show up in $X_T$ in place of a star around the vertex $v_1$ of $X$, hence,
a copy $\widehat  S$ of $S$ will also show up in $P_T$ as a subgraph of $\widehat T$.
 Since $T$ is trivalent and
 $\widehat T$ is a tree that has $m_0$ vertices of degree 1, one vertex $\widehat u$  of degree $m_0$ and other vertices of degree 2, it follows that  $\widehat S$ contains a single vertex of degree $\ge 3$ which implies $m_1 = 3$.
Since $T$ is an arbitrary trivalent tree, we may pick a tree $T = T_{12}$ that has adjacent vertices $w_{12}$, $w_3$ of degree 3 such that $w_{12}$ is adjacent to both $u_1, u_2$ and $w_3$ is  adjacent to $u_3$,
see Fig. 4(a). Note that, in this case,  $S$  is  the subtree   of $T = T_{12}$  that contains vertices $u_1, u_2$, $u_3$, $w_{12}, w_3$
and has 4 edges that connect these vertices, see Fig. 4(a).
\begin{center}
\begin{tikzpicture}[scale=.64]

\draw  (0,0)[fill = black]circle (0.06);
\draw  (-1,1)[fill = black]circle (0.06);
\draw  (-1,-1)[fill = black]circle (0.06);
\draw  (2,0)[fill = black]circle (0.06);
\draw  (3,1)[fill = black]circle (0.06);
\draw(0,0) -- (-1,1);
\draw(0,0) -- (-1,-1);
\draw(0,0) -- (2,0);
\draw(2,0) -- (3,1);
\draw[dashed](2,0) -- (3,-.5);
\draw  (2.5,-1.5)[fill = black]circle (0.06);
\draw  (4.4,-0.5)[fill = black]circle (0.06);
\node at (3.5,-1) {$\ldots$};

\node at (-1,1.5) {$u_1$};
\node at (-1,-1.5) {$u_2$};
\node at (0.25,.5) {$w_{12}$};
\node at (1.75,.5) {$w_{3}$};
\node at (3,1.5) {$u_3$};
\node at (2,-1.5) {$u_4$};
\node at (5,-0.5) {$u_{m_0}$};
\node at (1,1.5) {$T_{12}$};
\node at (1.25,-2.5) {Figure 4(a)};

\begin{scope}[shift={(9,0)}]

\draw  (0,0)[fill = black]circle (0.06);
\draw  (-1,1)[fill = black]circle (0.06);
\draw  (-1,-1)[fill = black]circle (0.06);
\draw  (2,0)[fill = black]circle (0.06);
\draw  (3,1)[fill = black]circle (0.06);
\draw(0,0) -- (-1,1);
\draw(0,0) -- (-1,-1);
\draw(0,0) -- (2,0);
\draw(2,0) -- (3,1);
\draw[dashed](2,0) -- (3,-.5);
\draw  (2.5,-1.5)[fill = black]circle (0.06);
\draw  (4.4,-0.5)[fill = black]circle (0.06);

\node at (3.5,-1) {$\ldots$};

\node at (-1,1.5) {$u_1$};
\node at (-1,-1.5) {$u_3$};
\node at (0.25,.5) {$w_{13}$};
\node at (1.75,.5) {$w_{2}$};
\node at (3,1.5) {$u_2$};
\node at (2,-1.5) {$u_4$};
\node at (5,-0.5) {$u_{m_0}$};
\node at (1,1.5) {$T_{13}$};
\node at (1.25,-2.5) {Figure 4(b)};
\end{scope}

\end{tikzpicture}
\end{center}

\noindent Since the image $\widehat  w_{12}$ of  $w_{12}$ in
$\widehat S$  has degree 3, we have $\deg \widehat  w_{12} = m_0$ in $\widehat T_{12}$.
This, in particular, means that every preimage of $w_3$ in $\widehat T_{12}$ has degree 2 and adjacent to a  preimage of  $w_{12}$.

Consider a vertex $u \in V Q$, $Q \in \{ X, Y \}$,  such that $\beta_Q(u) = v_0$.
Let $g_1, \dots, g_k$ be all of the oriented edges of $Q$ that end in $u$. We claim that the set $\{ \beta_Q(g_1), \dots, \beta_Q(g_k) \}$ may not contain both
$e_3$ and $e_j$, where $j >3$. Arguing on the contrary, assume that
\begin{equation}\label{inc}
\{ e_3, e_j \} \subseteq   \{ \beta_Q(g_1), \dots, \beta_Q(g_k)  \}  .
\end{equation}
Then a star neighborhood of $u$ in $Q$ would turn in $Q_{T_{12}}$ into a tree isomorphic
to a subtree $S_u$ of $T_{12}$ which contains vertices $w_3, u_3, u_j$, see Fig.~4(a).
Clearly, either $\deg w_3 = 3$ in ${S_u}$  or  $\deg w_3 = 2$ in ${S_u}$  and then $S_u$ contains no vertex $w_{12}$. In either case, a copy of $S_u$ is present in $\widehat T_{12}$ which is impossible because, as we saw above,  every preimage of $w_3$ in $\widehat T_{12}$ has degree 2 and adjacent to a  preimage of  $w_{12}$. Thus the inclusion \eqref{inc} is impossible.

Recall that we can pick any trivalent tree $T$, in particular, we can pick a tree $T = T_{13}$ that has adjacent vertices $w_{13}$, $w_2$ of degree 3 such that $w_{13}$ is adjacent to both $u_1, u_3$ and $w_2$ is  adjacent to $u_2$, see   Fig.~4(b).

Repeating the above argument with indices 2 and 3 switched, we can show that
$$
\{ e_2, e_j \} \not\subseteq \{ \beta_Q(g_1), \dots, \beta_Q(g_k)  \}
$$
for every $j > 3$.

Similarly, switching indices 1 and 3,  we prove that
$$
\{ e_1, e_j \} \not\subseteq \{ \beta_Q(g_1), \dots, \beta_Q(g_k)  \}
$$
for every $j > 3$.

Now we see that for every vertex $u \in VX \vee VY$ such that  $\beta_Q(u) = v_0$, where
$Q \in \{ X, Y \}$, the edges $ g_1, \dots, g_k $ of $Q$ that end in $u$ have the following property: either
$$ \{ \beta_Q(g_1), \dots, \beta_Q(g_k)  \} \subseteq
\{ e_1, e_2, e_3 \} \quad \mbox{or} \quad
 \{ \beta_Q(g_1), \dots, \beta_Q(g_k)  \} \subseteq
\{ e_4, \dots, e_{m_0} \}  .
$$
Since the edges of $X \vee Y$ are identified in the pushout  $P = X \vee_{W(S_1)}  Y$ if and only if their $\beta_X$-, $\beta_Y$-images in $P$ are equal, it follows that the terminal vertex of $e_1, e_2, e_3$ must be different in $P$ from the terminal vertex of an edge $e_j$ where $j \ge 4$. This contradiction to the definition of the edges  $e_1, \dots, e_{m_0}$ of $U$ completes the proof.
\end{proof}

We are now ready to prove our key lemma.

\begin{lemma}\label{mn}     Suppose that the graphs $X, Y, W(S_1), P, Z, U$ and corresponding maps
$\al_X,  \ldots, \delta$ are defined  as in Fig.~1. Then there exists a finite sequence $\tau$ of alternating replacements
as in Lemma~\ref{rpl} and \ED s as in Lemma~\ref{ED} that result in graphs
$X^\tau, Y^\tau$, $W(S_1)^\tau$, $P^\tau$,  $Z^\tau, U^\tau$ that have the following properties: $\brr(X^\tau) = \brr(X)$,
$\brr(Y^\tau) = \brr(Y)$, $\brr( W(S_1)^\tau) = \brr(W(S_1))$, \ $P^\tau =  Z^\tau = U^\tau$, $\brr(P^\tau) \ge \brr(P)$, every vertex of $U^\tau$ has degree  3 or 2 and every vertex of degree 3 of $U^\tau$ has a preimage of degree 3 in $X^\tau$ or  $Y^\tau$.
\end{lemma}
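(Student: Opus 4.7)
My plan is to iterate the two operations of Lemmas~\ref{rpl} (pushout replacement) and \ref{ED} (elementary deformation) in alternation, halting when $U$ becomes almost trivalent. Starting from the given data, I first apply Lemma~\ref{rpl} to normalize $P = Z = U$ while preserving $\brr(X), \brr(Y), \brr(W(S_1))$ and leaving $\brr(P)$ unchanged. If every vertex of $U$ already has degree at most $3$, I halt; otherwise I pick a vertex $v_0 \in VU$ with $\deg v_0 \geq 4$ and apply Lemma~\ref{ED} around $v_0$ by a trivalent tree. By Lemma~\ref{ED}, either $\brr(P)$ strictly increases, or $\brr(P)$ stays the same and the defect $\sigma(P) := \sum_{v \in VP} \max(\deg v - 3,\,0)$ strictly decreases. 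I then reapply Lemma~\ref{rpl} to restore $P = Z = U$, and repeat.

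Termination will follow from a lexicographic monovariant of the form $(\,M - \brr(P^\tau),\, \sigma(P^\tau)\,)$, where $M$ is a uniform upper bound on $\brr(P^\tau)$ throughout the process. Such an $M$ exists because at every stage $P^\tau = X^\tau \vee_{W(S_1)^\tau} Y^\tau$ is a pushout along the preserved subgraph $W(S_1)^\tau$, and a direct edge/vertex count in this pushout, combined with the invariance of $\brr(X), \brr(Y), \brr(W(S_1))$, gives a bound depending only on the initial data. Each iteration strictly decreases the monovariant in the lexicographic order, so the loop halts after finitely many steps.

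At the halt we have $P^\tau = Z^\tau = U^\tau$ with no vertex of degree $\geq 4$. Since tree-based EDs and Lemma~\ref{rpl} do not introduce vertices of degree $1$ (the initial $X, Y, U$ being cores), every vertex of $U^\tau$ has degree $2$ or $3$. To verify the preimage property, I use that at the terminal state $\delta\gamma = \mathrm{id}$, so $\beta_X : X^\tau \to P^\tau$ agrees with $\ph_{X^\tau}$ and is therefore locally injective; hence $\deg v \leq \deg \beta_X(v)$ for every $v \in VX^\tau$, and likewise for $Y^\tau$. Lemma~\ref{knt} supplies, for every vertex $u$ of $P^\tau$ of degree $\geq 3$, some preimage in $X^\tau \vee Y^\tau$ of degree $\geq 3$; combined with the previous local-injectivity bound, this preimage has degree exactly $3$ whenever $u$ has degree $3$.

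The main obstacle will be establishing the uniform upper bound $M$ on $\brr(P^\tau)$ to guarantee termination; this requires a careful bookkeeping argument in the pushout (controlling how many vertex/edge identifications can be ``redundant'' when iterated subdivisions grow $|VW(S_1)^\tau|$). Once $M$ is secured, the rest of the proof is a straightforward orchestration of Lemmas~\ref{rpl}, \ref{ED}, and \ref{knt}.
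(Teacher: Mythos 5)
Your iteration scheme, halting condition, and the lexicographic monovariant $\bigl(M-\brr(P^\tau),\,\sigma(P^\tau)\bigr)$ are exactly the structure of the paper's proof, and your verification of the terminal properties (degrees $2$ or $3$, and the preimage property via Lemma~\ref{knt} plus local injectivity of $\beta_X,\beta_Y$ once $P=Z=U$) is sound. The problem is the one point you yourself flag as ``the main obstacle'': the uniform upper bound $M$ on $\brr(P^\tau)$ is asserted, not proved, and the route you sketch for it does not work as stated. A direct edge/vertex count in the pushout $P^\tau = X^\tau \vee_{W(S_1)^\tau} Y^\tau$ gives, at best, an estimate of the form $\brr(P^\tau) \le \brr(X^\tau)+\brr(Y^\tau)+|VW(S_1)^\tau|$ (the identifications of vertices are controlled only by the number of vertices of $W(S_1)^\tau$, not by its reduced rank), and $|VW(S_1)^\tau|$ grows without bound under the repeated subdivisions that the elementary deformations require. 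So invariance of $\brr(X),\brr(Y),\brr(W(S_1))$ alone does not deliver a bound ``depending only on the initial data,'' and without $M$ the termination argument collapses.

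The paper closes this gap by a group-theoretic observation rather than by counting in the pushout: after each replacement, $Z^\tau$ is the Stallings graph of the join $\langle H,K,S_1\rangle$ (of the cosmetically modified subgroups), which is generated by $\rr(H)+\rr(K)+|S_1|$ elements; hence $\brr(P^\tau)=\brr(Z^\tau)\le \rr(H)+\rr(K)+|S_1|$. This quantity is invariant under the cycles because $|S_1|$ equals the number of connected components of $W(S_1)^\tau$ and $\brr(H)=\brr(X^\tau)$, $\brr(K)=\brr(Y^\tau)$ are preserved, so one may take $M=\rr(H)+\rr(K)+|S_1|$ and even extract the explicit bound $2(\rr(H)+\rr(K)+|S_1|)^2$ on the number of cycles. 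You need to replace your combinatorial bookkeeping by this (or an equivalent) argument for your proof to be complete.
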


\begin{proof} Applying Lemma~\ref{rpl}, we may assume that $P = Z = U$. If every vertex of $U$ has degree 2 or 3, then our claim holds true as follows from Lemma~\ref{knt}. Hence, we may assume that $U$ contains a vertex $v_0$ of
degree at least 4. In view of Lemma~\ref{ED}, by checking all possible \ED s of $U$ around $v_0$, we can find an \ED\ by means of a trivalent tree $T$
such that either $\brr(P_{T}) > \brr(P)$ or $\brr(P_{T}) = \brr(P)$ and
$$
\sum_{v \in VP_{T} } \max(\deg v -3, 0) < \sum_{v \in V P } \max(\deg v -3, 0) .
$$
Invoking Lemma~\ref{rpl}, we replace $U_T$ with $P_{T}$ and, completing one cycle of changes in graphs $X, Y, W(S_1), P, Z, U$, we rename $Q := Q_T$, where $Q \in \{ X, Y, W(S_1), P, Z, U \}$, and start over.

Observe that the rank  $\brr(Z) = \brr( \langle H, K, S_1 \rangle) $ is bounded above by the number of generators
 $\rr(H) + \rr(K) + |S_1|$
and this bound does not change as we perform cycles of changes of graphs $X, Y, W(S_1), P, Z, U$,
because $|S_1|$ is equal to  the number of connected components of $W(S_1)$ and $\brr(H) = \brr(X)$,  $\brr(K) = \brr(Y)$. This implies that the number
$$
\sum_{v \in VZ } \max(\deg v -3, 0)
$$
is bounded above by $ 2\brr(Z)  \le 2(\rr(H) + \rr(K) + |S_1|)$. Thus the total number of cycles will not exceed
$2(\rr(H) + \rr(K) + |S_1|)^2$ and our lemma is proved.
\end{proof}

Note that the arguments of the proof of Lemma~\ref{mn} provide an algorithm that deterministically constructs
the desired graphs $X^\tau, Y^\tau$, $W(S_1)^\tau$, $P^\tau$,  $Z^\tau, U^\tau$ in polynomial space
(of size of input which are graphs $X, Y, W(S_1)$, $P, Z, U$ along with associated maps $\al_X, \ldots, \delta$).

\begin{lemma}\label{sum}
$\brr(Z) \le  \brr(X) +  \brr(Y)$.
\end{lemma}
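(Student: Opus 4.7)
The plan is to deduce the inequality by invoking the almost trivalent reduction of Lemma~\ref{mn} and then carrying out a short vertex-counting argument. Apply Lemma~\ref{mn} to obtain the modified tuple $(X^\tau,Y^\tau,W(S_1)^\tau,P^\tau,Z^\tau,U^\tau)$ satisfying $P^\tau=Z^\tau=U^\tau$, $\brr(X^\tau)=\brr(X)$, $\brr(Y^\tau)=\brr(Y)$, and $\brr(Z^\tau)=\brr(P^\tau)\ge\brr(P)\ge\brr(Z)$, where every vertex of $U^\tau$ has degree $2$ or $3$ and every degree-$3$ vertex of $U^\tau$ admits a degree-$3$ preimage in $X^\tau$ or $Y^\tau$. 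Since $X^\tau$ and $Y^\tau$ are cores mapped into $U^\tau$ by locally injective graph maps, their vertices likewise have degree $2$ or $3$ only (local injectivity bounds degrees by $3$, and the core property excludes degree $1$). It therefore suffices to show $\brr(Z^\tau)\le\brr(X^\tau)+\brr(Y^\tau)$.

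Let $V_d\Gamma$ denote the set of degree-$d$ vertices of a graph $\Gamma$. The key arithmetic observation is that for any graph $\Gamma$ whose every vertex has degree $2$ or $3$, the handshake identity $2|E\Gamma|=2|V_2\Gamma|+3|V_3\Gamma|$ gives
\[
\brr(\Gamma)=|E\Gamma|-|V\Gamma|=\tfrac{1}{2}|V_3\Gamma|.
\]
Applied to $X^\tau,Y^\tau,Z^\tau$, this reduces the target inequality to $|V_3 Z^\tau|\le|V_3 X^\tau|+|V_3 Y^\tau|$.

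To finish, combine the preimage property with local injectivity. A degree-$3$ vertex of $X^\tau$ maps under the Stallings immersion $X^\tau\to U^\tau=Z^\tau$ to a vertex of degree at least $3$, hence of degree exactly $3$; similarly for $Y^\tau$. Thus the two Stallings immersions yield a map $V_3 X^\tau\sqcup V_3 Y^\tau\to V_3 Z^\tau$, and the conclusion of Lemma~\ref{mn} is precisely that this map is surjective. Consequently $|V_3 Z^\tau|\le|V_3 X^\tau|+|V_3 Y^\tau|$, and combining with the displayed identity yields $\brr(Z)\le\brr(Z^\tau)\le\brr(X^\tau)+\brr(Y^\tau)=\brr(X)+\brr(Y)$. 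The genuine obstacle---reducing to the almost trivalent self-covering setting in which $P=Z=U$ and the Imrich--M\"uller preimage property holds while keeping $\brr(X),\brr(Y)$ fixed and not decreasing $\brr(Z)$---was already dispatched in Lemmas~\ref{rpl}--\ref{mn}; after that reduction, Lemma~\ref{sum} is pure combinatorial bookkeeping.
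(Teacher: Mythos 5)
Your proof is correct and follows the same route as the paper: reduce via Lemma~\ref{mn} to the almost trivalent setting with $P^\tau=Z^\tau=U^\tau$, use the identity $2\brr(\Gamma)=|V_3\Gamma|$ for graphs with vertex degrees $2$ and $3$, and conclude $|V_3Z^\tau|\le|V_3X^\tau|+|V_3Y^\tau|$ from the degree-$3$ preimage property. You spell out a few steps (e.g.\ $\brr(Z)\le\brr(Z^\tau)$ and why $X^\tau,Y^\tau$ are almost trivalent) that the paper leaves implicit, but the argument is the same.
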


\begin{proof} In view of Lemma~\ref{mn}, we may assume that Stallings graphs $X, Y, Z$ of subgroups
$H, K, \langle H, K, S_1 \rangle$ satisfy the conclusion of Lemma~\ref{mn}. Then  all  of the vertices  of $X,  Y, Z$   have degree 2 or 3.
 Hence, $2\brr(U) =  | V_3 U|$, where $ V_3 U$ is the set of vertices of degree 3 of $U$.
 Similarly, we have $2\brr(X) =  | V_3 X |$ and $2\brr(Y) =  | V_3 Y |$. Now application of  Lemma~\ref{knt} yields
 $|V_3 Z | \le | V_3 X | + | V_3 Y | $ which implies
  the desired inequality $\brr(Z) \le \brr(X) +  \brr(Y)$.
\end{proof}

\section{Applications}

As an application of Lemma~\ref{mn},  we state and prove a couple of specific inequalities for
reduced ranks of the generalized intersections  and joins of subgroups in free groups.

\begin{thm}\label{th1} Let $H, K$ be finitely generated subgroups of a free group $F$.
Let $S(H,K) \subseteq F$ denote a set of representatives of those double cosets $H tK \subseteq F$,  $t \in F$,
that have the property $H t Kt^{-1} \ne \{ 1\}$,  and let $S_1 \subseteq S(H,K)$ be nonempty.
Then $\brr (\langle  H,   K, S_1 \rangle)  \le  \brr( H) + \brr( K)$ and
\begin{align}\notag
 \brr(H, K, S_1) & :=   \sum_{s \in S_1}  \brr(H \cap s K s^{-1}) \\ \label{inq1t}
 & \le
  2  \brr(H)  \brr(K) - \brr (\langle  H, K, S_1 \rangle) \min(\brr(H),  \brr(K)) .
\end{align}
Moreover,
\begin{align}\label{inq2t}
\! \brr(H, K,  S_1) \! \le     \tfrac 12 ( \brr(H) \! + \! \brr(K)\! - \! \brr(\langle  H, K,  S_1 \rangle) )
( \brr(H) \! + \! \brr(K)\! -\! \brr(\langle  H,  K,  S_1 \rangle)\! + \! 1) .
\end{align}
\end{thm}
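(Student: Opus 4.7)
The first inequality $\brr(\langle H,K,S_1\rangle) \le \brr(H) + \brr(K)$ is exactly Lemma~\ref{sum}, so the plan concentrates on \eqref{inq1t} and \eqref{inq2t}. For these, the strategy is to pass to the almost trivalent setting produced by Lemma~\ref{mn} and then carry out a degree-3 counting argument on the pullback. Write $h = \brr(H)$, $k = \brr(K)$, $z = \brr(\langle H, K, S_1\rangle)$, and $s = h + k - z \ge 0$; I assume without loss of generality that $h \le k$.

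Applying Lemma~\ref{mn}, I replace the original graphs by $X^\tau, Y^\tau, W(S_1)^\tau, U^\tau$ with $U^\tau = P^\tau = Z^\tau$ almost trivalent, $\brr(X^\tau) = h$, $\brr(Y^\tau) = k$, $\brr(W(S_1)^\tau) = \brr(H, K, S_1)$, $\brr(U^\tau) \ge z$, and every degree-3 vertex of $U^\tau$ admitting a degree-3 preimage in $X^\tau$ or $Y^\tau$. Local injectivity of $\ph_{X^\tau}, \ph_{Y^\tau}$ together with $W(S_1)^\tau \sbs \core(X^\tau \x_{U^\tau} Y^\tau)$ forces each of $X^\tau, Y^\tau, W(S_1)^\tau$ to be almost trivalent, whence $\brr(\Gamma) = \tfrac12 |V_3 \Gamma|$ for $\Gamma$ any of these graphs (where $V_3 \Gamma$ denotes the set of degree-3 vertices of $\Gamma$).

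For each $v \in V_3 U^\tau$ set $a(v) = |\ph_{X^\tau}^{-1}(v) \cap V_3 X^\tau|$ and $b(v) = |\ph_{Y^\tau}^{-1}(v) \cap V_3 Y^\tau|$. These satisfy $\sum_v a(v) = 2h$, $\sum_v b(v) = 2k$, $n := |V_3 U^\tau| \ge 2z$, and (crucially, by Lemma~\ref{mn}) $a(v) + b(v) \ge 1$. A degree-3 vertex of the pullback arises precisely from a pair $(v_1, v_2) \in V_3 X^\tau \x V_3 Y^\tau$ with matching image in $V_3 U^\tau$; since $W(S_1)^\tau$ sits inside the core of the pullback,
\[
2\brr(H, K, S_1) = |V_3 W(S_1)^\tau| \le \sum_{v \in V_3 U^\tau} a(v) b(v).
\]

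The remaining task reduces to a combinatorial bound on this sum. Letting $m = |\{v : a(v) \ge 1\}|$, the constraint $a(v)+b(v) \ge 1$ gives $\sum_{v \colon a(v) \ge 1} b(v) \le 2k - (n - m)$, while the maximum of $a(v)$ on the same set is at most $2h - m + 1$, hence $\sum_v a(v)b(v) \le (2h - m + 1)(2k - n + m)$. Setting $n = 2z$ (the worst case) and maximizing the quadratic over integer $m \in [1, 2h]$, a direct computation shows the maximum equals $s(s+1)$, attained at $m = h - k + z$ (or $h - k + z + 1$) when this lies in $[1, 2h]$; in the complementary regime $k \ge h + z$ the maximum sits at $m = 1$ and equals $2h(2k-2z+1)$, which is again $\le s(s+1)$. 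This already gives $\brr(H, K, S_1) \le s(s+1)/2$, namely \eqref{inq2t}. For \eqref{inq1t}, I verify separately the algebraic inequalities $s(s+1) \le 4hk - 2hz$ in the regime $k \le h + z - 1$ (it reduces to $(k-h)(k-h-1) \le (z-1)(2k-z)$, which follows from $k - h \le z - 1$ and $k - h \le 2k - z$, the latter from $z \le h + k$) and $2h(2k-2z+1) \le 4hk - 2hz$ in the regime $k \ge h + z$ (equivalent to $z \ge 1$; the degenerate case $z = 0$ forces $h = k = 0$). The main obstacle is this elementary but careful case analysis in the final optimization; the substantive topological work has been packaged into Lemma~\ref{mn}.
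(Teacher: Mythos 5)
Your proposal is correct and follows essentially the same route as the paper: reduce to the almost trivalent setting via Lemma~\ref{mn}, bound $2\brr(H,K,S_1)=|V_3 W(S_1)^\tau|$ by $\sum_{v} a(v)b(v)$ using the key property that every degree-$3$ vertex of $U^\tau$ has a degree-$3$ preimage, and then optimize the resulting quadratic. The only difference is organizational: you extract both \eqref{inq1t} and \eqref{inq2t} from the single parametrized bound $(2h-m+1)(2k-n+m)$ followed by elementary case analysis, whereas the paper obtains \eqref{inq1t} from two asymmetric estimates (its \eqref{in3}--\eqref{in4}) via a max/min argument and treats \eqref{inq2t} by a separate but equivalent maximization.
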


Note that the inequality \eqref{inq1t} is a strengthened version of \eqref{inq11} and
this strengthening is analogous to a strengthened version of
the Hanna Neumann conjecture introduced by  Walter Neumann \cite{WN}.
We also remark that the inequality \eqref{inq1t} in the cases when $S_0 = \{ 1 \}$ and  $S_0 = S(H,K)$ is due to Kent \cite{K}
and the inequality  \eqref{inq2t} is shown by Dicks \cite{D2} who also obtained other inequalities.

It is worthwhile  to mention that the natural question on the existence of a bound that would be
a stronger version of \eqref{inq1t}, in which the first term would have the coefficient 1 in place of 2
and the second negative term would contain the factor $\brr (\langle  H, K, S_1 \rangle)$ with some coefficient, has a negative solution. Indeed, according to \cite{I12}, there are finitely generated subgroups $H, K$ of a free group $F$
such that   $\brr(H, K, S(H,K)) = \brr(H)  \brr(K) >0$ and $\brr (\langle  H, K, S(H,K)) >C$ for any constant $C >0$.

\begin{proof}
In view of Lemma~\ref{mn}, we may assume that Stallings graphs $X, Y, Z$ of subgroups
$H, K, \langle H, K, S_1 \rangle$, resp., and graphs $W(S_1)^\tau$, $P^\tau$ satisfy the conclusion of Lemma~\ref{mn}.
Recall that $\brr(H) = \brr(X)$, $\brr(K) = \brr(Y)$ and $\brr(\langle H, K, S_1 \rangle ) \le  \brr(Z) = \brr(U)$,

As before, if $Q$ is a  graph whose vertices have degree 2 or 3, then $V_3 Q$ denotes the set of vertices of $Q$ of degree 3. Recall that  $2 \brr(Q) = |V_3 Q |$.

For every $v \in VU$, denote
$$
k_v := |V_3X \cap \beta_X^{-1}(v)|, \quad \ell_v := |V_3Y \cap \beta_Y^{-1}(v)|, \quad m := |V_3 W(S_1) |, \quad
n := |V_3 U | .
$$

We also let $V_3U := V_B U \vee V_X U\vee V_Y U$, where $V_B U$ is the set of vertices of $U$ that have preimages
of degree 3 in both $X$ and $Y$, $V_X U$ is the set of vertices of $U$ that have preimages
of degree 3 in $X$ only, and $V_Y U$ is the set of vertices of $U$ that have preimages
of degree 3 in $Y$ only.  Denote $n_B := |V_B U|$,  $n_X := |V_X U|$, $n_Y := |V_Y U|$.
We let  $V_3 X = V_{31} X \vee V_{32} X$, where $V_{31} X := \beta_X^{-1}(V_X U)$ and  $V_{32} X := \beta_X^{-1}(V_B U)$. Similarly, we let $V_3 Y = V_{31} Y \vee V_{32} Y$, where $V_{31} Y := \beta_Y^{-1}(V_Y U)$ and  $V_{32} Y := \beta_Y^{-1}(V_B U)$. Denote
$$
k := |V_3X|, \   k_1 := |V_{31} X|, \ k_2 := |V_{32} X|,  \  \ell := |V_3Y|, \   \ell_1 := |V_{31} Y|, \ \ell_2 := |V_{32} Y| .
$$
Clearly,
$
k = k_1 +k_2 , \   \ell = \ell_1 +\ell_2 , \  n = n_B+n_X +n_Y, \ k_1 \ge n_X, \ \ell_1 \ge n_Y$, and
$$
m  \le \sum_{v \in VU} k_v \ell_v = \sum_{v \in V_BU} k_v \ell_v .
$$

We now continue with arguments similar to those of Imrich-M\"uller \cite{IM} that follow the proof of their lemma.
Since $\sum_{v \in V_BU} \ell_v = \ell_2$ and $\ell = \ell_1 +\ell_2 $, we obtain
\begin{align}\notag
m & \le \sum_{v \in V_BU} k_v \ell_v =  k\ell - \sum_{v \in V_BU} (k -k_v) \ell_v  - k \ell_1 \\ \notag
 &  \le k \ell - \sum_{v \in V_BU} (k -k_v)  - k \ell_1 =  k \ell -  kn_B + k_2  - k \ell_1    \\  \label{in3}
 &  \le k \ell - k(n_B + n_Y -1 ) .
\end{align}
Switching $X$ and $Y$, we analogously obtain
\begin{align}\label{in4}
m & \le k \ell - \ell (n_B + n_X -1 ) .
\end{align}

Assume that $n_B = 0$. Then $\brr(W(S_1)) = 0$ and the inequality \eqref{inq1t} is equivalent to
\begin{align*}
0 \le 2 \brr(X) \brr(Y) -  \min( \brr(X), \brr(Y))\brr(Z)
\end{align*}
which is equivalent to $\brr(Z) \le 2 \max( \brr(X), \brr(Y))$. Since
$\brr(Z) \le \brr(X) + \brr(Y)$ by Lemma~\ref{sum},
it follows that   \eqref{inq1t}  holds true. A reference to Lemma~\ref{sum} also proves that
$$
\brr(\langle H, K, S_1 \rangle ) \le  \brr(Z) \le \brr(X)+ \brr(Y) =  \brr(H) +\brr(K) .
$$

Suppose that $n_B \ge 1$.  Since $n_B +n_X+n_Y = 2\brr(U) $, it follows that
$$
n_X+n_Y \le 2\brr(U)-1 .
$$
Hence,
$\min(n_X, n_Y) \le \brr(U)-1$ and $\max(n_B + n_X -1,  n_B + n_Y -1 ) \ge \brr(U)$.  Therefore, the inequalities
\eqref{in3}--\eqref{in4} imply that
\begin{align*}
2  \brr(W(S_1)) = m & \le 4 \brr(X) \brr(Y) - 2   \min( \brr(X), \brr(Y))\brr(Z) .
\end{align*}
Dividing by 2, we obtain  the required bound \eqref{inq1t}.
\medskip

Observe that  Lemma~\ref{sum}  together with the foregoing classification of vertices of degree 3 in graphs
$W(S), X, Y, P=Z=U$ make it possible to produce other inequalities for  $\brr(H, K, S_1) =  \brr(W(S_1)) $ that would involve $\brr(\langle H, K, S_1 \rangle ) = \brr(Z)$.  For example, when maximizing the sum
$\sum_{v \in VU} k_v\ell_v$ which gives an upper bound for $m = 2 \brr(W(S_1))$, we may assume that there is a single vertex
 $v$ such that both $k_v, \ell_v >0$, whence $n_B = 1$, $k_2 = k_v$ and $\ell_2 = \ell_v$.
 We may further  assume that $k_1 = n_X$, $\ell_1 = n_Y$. Indeed, if, say $k_1 > n_X$, then we could increase
 $k_v$ by  $k_1 - n_X$ and decrease $k_1$ by $k_1 - n_X$ making thereby $k_2\ell_2 = k_v\ell_v$ greater. Hence, to get an upper bound for $m$,  we can maximize the product $k_2\ell_2 = (k - k_1) (\ell -\ell_1)$ subject to $k_1 + \ell_1+ 1 = n$.
 Since $k, \ell, n$ are fixed positive even integers, equal to $2\brr(X)$, $2\brr(Y)$, $2\brr(Z)$, resp.,
 it follows that  the product $(k- k_1)(\ell -n+1 - k_1)$ has the maximum  at $k_1 = \tfrac 12( k-\ell +n)$. Since $k_1$ is an integer, it follows that  $(k- k_1)(\ell -n+1 - k_1)$ has a maximal value for integer $k_1$ when
$k_1 = \tfrac 12( k-\ell +n)$ or  $k_1 = \tfrac 12( k-\ell +n)-1$. This means that, unconditionally, we have
$$
m \le  (k- k_1)(\ell -n+1 - k_1)  \le \tfrac 12( k+\ell -n+2 ) \cdot \tfrac 12 (k+\ell -n )  .
$$
Equivalently,
 \begin{align*}
2  \brr(W(S_1)) = m & \le   ( \brr(X) + \brr(Y) - \brr(Z) +1) ( \brr(X) + \brr(Y) - \brr(Z))
\end{align*}
which implies the bound \eqref{inq2t}.  Theorem~\ref{th1} is proved.
\end{proof}

\section{One More Question}

One might wonder what would be the conditions that guarantee the existence of a nontrivial
``blow-up"  of a vertex $v_0$ of the  graph $P = Z =U$ by an \ED\ around $v_0$, as defined in Sect.~3.
Here we present a result that provides a criterion for the existence of such a ``blow-up" of a vertex $v_0$ of $U$, i.e.,
the existence of an \ED\ around a vertex $v_0$ by means of a tree $T$ such that $\brr(P_{T}) > \brr(P)$.

Assume that $P = Z =U$ and the maps $\gamma, \delta$ in Fig.~1 are identity maps on $U$. Let $v_0$ be a fixed vertex of $U$ and let $D$ denote the set of all oriented edges of $U$ that end in $v_0$.

A vertex $v$ of one of the  graphs $W(S_1), X, Y$ is called a {\em $v_0$-vertex} if the image of $v$ under  $\beta_X \al_X$, $\beta_X$, $\beta_Y$, resp., is $v_0$.

For every $v_0$-vertex $v \in V Q$, where $Q \in \{ X, Y, W(S_1) \}$,  define the set $D(v) \subseteq  D$ so that  $e \in D(v)$  if and only if there is an edge $f \in \vec E Q$ so that the terminal vertex $f_+$ of $f$ is $v$ and the image of the edge $f$ in $U$ is $e$.

Consider a  partition $D = A \vee B$ of the set $D $ 
into two nonempty subsets $A, B$.

A $v_0$-vertex $v \in V Q$, where $Q \in \{ X, Y, W(S_1) \}$, is said to be of {\em type $\{A, B\}$} if both intersections $D(v) \cap A$ and $D(v) \cap B$  are nonempty. The set of all vertices of type $\{A, B\}$  of a graph ${Q}$, where
 $Q \in \{ X, Y, W(S_1) \}$, is denoted $V_{\{A,B\}} Q$.

Consider a bipartite graph $\Psi(\{A, B \} )$ whose set of vertices
$$
V \Psi(\{A, B \})  = V_X \Psi(\{A, B \}) \vee V_Y  \Psi(\{A, B \} )
$$
consists of two disjoint parts
$$
V_X \Psi(\{A, B \}) :=  V_{ \{A, B \} }X \quad  \mbox{and } \quad
 V_Y  \Psi(\{A, B \} ):=  V_{\{ A, B \} }Y .
$$

Two vertices $v_X \in V_X \Psi(\{A, B \}) $ and
 $v_Y \in  V_Y \Psi(\{A, B \})$ are connected in $\Psi(\{A, B \})$ by an edge  if and only if there is a vertex $v \in W(S_1) $ such that
 $\alpha_{X}(v)  = v_X$,  $\alpha_{Y}(v)  = v_Y$ and $v \in V_{\{A, B \} }W(S_1)$.

\begin{theorem}\label{AB} Suppose $P = Z =U$, $v_0$ is a vertex of $U$.     The equality $\brr (P_{T}) = \brr (U_T) = \brr (U)$ holds for every \ED\ around $v_0$ by means of a tree $T$ if and only if the following conditions hold.
For every partition $D = A \vee B$  with nonempty $A, B$ if the graph $\Psi( \{ A, B \})$ contains $k$
connected components, then there exists a partition $D = \bigvee_{i=1}^{k+1} C_i$ such that, for every $i$, $C_i$ is nonempty and either $C_i \subseteq A$ or $C_i \subseteq
B$. Next, for every vertex $v \in V Q$, where $Q \in \{ X, Y \}$, such that $\beta_Q(v) = v_0$, the intersection
$D(v) \cap A$ is either empty  or $D(v) \cap A \subseteq C_{i_{vA}}$ for
some $i_{vA}$, $i_{vA} = 1, \dots, k+1$, and  the intersection $D(v) \cap B$ is either empty  or $D(v) \cap B \subseteq C_{i_{vB}}$ for some $i_{vB}, i_{vB} = 1, \dots, k+1$.
\end{theorem}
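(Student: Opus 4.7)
My plan is to analyze the blow-up $\widehat T$ by computing its vertex and edge counts combinatorially and showing that the stated partition-refinement condition characterizes exactly when these counts make $\widehat T$ a tree, which by Lemma~\ref{ZT} is equivalent to the equality $\brr(P_T) = \brr(U)$. For each $\tau \in V(T)$, let $V_\tau^X$ denote the set of $v_0$-vertices $v_X$ of $X$ whose spanning subtree $S_{v_X}^X \subseteq T$ contains $\tau$; define $V_\tau^Y, V_\tau^W$ analogously, and attach a bipartite graph $\Psi^\tau$ on $V_\tau^X \sqcup V_\tau^Y$ with edges coming from each $w \in V_\tau^W$ (joining $\alpha_X(w)$ and $\alpha_Y(w)$). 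Define $\Psi^t$ for each edge $t \in E(T)$ in the same way; removing $t$ from $T$ partitions $D = D_1^t \sqcup D_2^t$, and $\Psi^t$ coincides with $\Psi(\{D_1^t, D_2^t\})$ of the theorem.

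Unwinding the pushout $P_T$ shows $|V(\widehat T)| = \sum_{\tau} \pi_0(\Psi^\tau)$ and $|E(\widehat T)| = \sum_{t} \pi_0(\Psi^t)$, since preimages of $\tau$ or $t$ in $P_T$ are identified precisely along the $W$-generated equivalences read off by the $\pi_0$'s. Combining with the connectedness of $\widehat T$ from Lemma~\ref{ZT} and the tree identity $|V(T)| - |E(T)| = 1$, the equality $\brr(P_T) = \brr(U)$ is equivalent to
\[
\sum_{\tau \in V(T)} \bigl(\pi_0(\Psi^\tau) - 1\bigr) = \sum_{t \in E(T)} \bigl(\pi_0(\Psi^t) - 1\bigr).
\]

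For the $(\Leftarrow)$ direction, assume the partition-refinement condition for every binary partition of $D$. Proceed by induction on $|V(T)|$: at each step contract an internal edge $t = \tau_1\tau_2$ of $T$ to get a smaller tree $T'$, and use the refinement $\{C_i^t\}_{i=1}^{\pi_0(\Psi^t)+1}$ provided by the condition at $\{D_1^t, D_2^t\}$ to realign the component structures of $\Psi^{\tau_1}, \Psi^{\tau_2}$ into a single $\Psi^\tau$ for the contracted vertex $\tau$ of $T'$; the t
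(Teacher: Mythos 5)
Your framework---counting the vertices and edges of $\widehat T$ fibrewise over $V(T)$ and $E(T)$ via the component counts $\pi_0(\Psi^\tau)$, $\pi_0(\Psi^t)$, and reducing tree-ness of $\widehat T$ (hence, by Lemma~\ref{ZT}, the equality $\brr(P_T)=\brr(U)$) to the Euler identity $\sum_{\tau}(\pi_0(\Psi^\tau)-1)=\sum_{t}(\pi_0(\Psi^t)-1)$---is a reasonable and genuinely different starting point from the paper's, which instead builds one carefully chosen trivalent tree per bipartition for the forward direction and runs a shortest-circuit contradiction for the converse. But what you have written is a plan, not a proof, and two essential pieces are missing. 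The $(\Rightarrow)$ direction is absent entirely: nothing in the proposal produces, from the hypothesis that $\widehat T$ is a tree for every $T$, the partition $D=\bigvee_{i=1}^{k+1}C_i$ together with the compatibility clauses $D(v)\cap A\subseteq C_{i_{vA}}$ and $D(v)\cap B\subseteq C_{i_{vB}}$. The Euler identity does not see these clauses at all; one has to choose a specific tree $T(\{A,B\})$ with a single edge $e$ separating $A'$ from $B'$, prove that $\gamma_T^{-1}\delta_T^{-1}(e)$ is in bijection with the components of $\Psi(\{A,B\})$, and read off the $C_i$ from the components of $\widehat T$ minus those $k$ edges, then verify the compatibility clauses by tracking how $M_T(D(v))-\zeta_v^{-1}(\{e\})$ sits inside $\widehat T$. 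That is the content of the paper's forward direction and it is not a formality.

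The $(\Leftarrow)$ direction is cut off mid-sentence exactly where the work begins. The hypothesis of the theorem only constrains bipartitions of $D$ (equivalently, single edge-cuts of $T$), whereas the quantities $\pi_0(\Psi^\tau)$ for an internal vertex $\tau$ correspond to partitions of $D$ into $\deg\tau\ge 3$ parts; bridging from bipartition data to this multi-part data is precisely what your edge-contraction induction must accomplish, and the phrase ``realign the component structures'' is an assertion, not an argument. In particular you never use the compatibility clauses of the $C_i$, which are indispensable: they are what guarantees that the $W$-identifications respect the refined partition, so that removing $\gamma_T^{-1}\delta_T^{-1}(e)$ from $\widehat T$ yields exactly $k+1$ pieces rather than fewer. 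You also do not verify the fibrewise counting claims $|V(\widehat T)|=\sum_\tau\pi_0(\Psi^\tau)$ and $|E(\widehat T)|=\sum_t\pi_0(\Psi^t)$, which require knowing that the pushout identifications over a given $\tau$ or $t$ are generated exactly by the $w\in VW(S_1)$ whose subtree $M_T(D(w)')$ contains $\tau$ or $t$. Until both halves are supplied in detail, the statement is not proved.
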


\begin{proof} First we introduce the notation we will need below. As above, let $T$
be a tree whose set of vertices of degree 1 is $D'$. If  $H' \subseteq D'$
is a nonempty subset of vertices of degree 1 in $T$, we let $M_T(H')$ denote the
minimal subtree of $T$ that contains $H'$. Clearly, the set of vertices of degree
$\le 1$ of $M_T(H')$ is $H'$.  Note that $M_T(H') = H'$ if $H'$  consists of a single
vertex. If $E \subseteq D$, then $E' \subseteq D'$ denotes the image of $E$
under the map $d \to d'$  for every $d \in D$.

For every vertex $v \in V Q$, where $Q \in \{ X, Y, W(S_1) \}$, consider a tree
$M_T( D(v) )$ which is isomorphic to the subtree
$M_T( D(v)')$ of $T$ and let
\begin{equation}\label{zeta}
\zeta_v :   M_T( D(v) ) \to T
\end{equation}
denote the natural monomorphism whose image is  $M_T( D(v)' )$.

If $w$ is a vertex of $W(S_1)$, then the tree $M_T( D( \alpha_{X}(w) ))$ contains a subtree
isomorphic to $M_T( D( w) )$  which we denote $M_{T, X}( D( w) )$.

Similarly, the tree  $M_T( D( \alpha_{Y}(w) ))$ contains a subtree isomorphic
to $M_T( D( w) )$, denoted $M_{T, Y}( D(w) )$.

It follows from the definitions that the graph $\widehat T = \gamma_T^{-1}
\delta_T^{-1} (T)$ can be described as follows.
$\widehat T $ is the union of graphs $M_T( D(v) )$ over all  $v \in V Q$, $Q \in
\{ X, Y \}$, which are identified along their subgraphs of the form
$M_{T, X}( D( w) )= M_{T, Y}( D( w) )$ over all $w \in V W(S_1)$.

Now assume that the equality $\brr (P_{T})= \brr (U_T)$ holds for every  tree $T$ or,
equivalently,  see Lemma~\ref{ZT},  the graph $\widehat T$ is a tree for every $T$. Note that
$\widehat T$ being a tree need not imply that $\wht T$ is naturally isomorphic to $T$.

Let $D =  A \vee B$ be a partition of $D$ with nonempty $A, B$. Our goal is to find
a partition $D = \bigvee_{i=1}^{k+1} C_i$  with the properties described in
Theorem~\ref{AB}.

Consider a trivalent tree $T = T(\{ A, B\})$ such that $T$ contains a nonoriented edge $e$ so
that the graph $T \setminus \{ e \}$ consists of two connected components $T_{eA}$, $T_{eB}$
such that  $A' \subseteq V T_{eA}$ and $B' \subseteq V T_{eB}$.

Note that the tree $M_T( D(v) )$, where $v \in V Q$, $Q \in \{ X, Y, W(S_1) \}$,
contains an edge $f$ with $\zeta_v(f) = e$, where $\zeta_v$ is defined by \eqref{zeta}, if and only if $v$ has type $\{ A, B \}$.
Hence, the set $\gamma_T^{-1}  \delta_T^{-1} (e) $  consists of images  of such edges $f$ of
$M_T( D(v) )$ with $\zeta_v(f)=e$.

Furthermore, let  $v, v' \in V_{ \{A, B \} } X    \vee  V_{ \{A, B \} } Y $ be two vertices that belong to the same connected component of $\Psi(\{A, B \} )$. Then there exists a
sequence  $v = v_1, v_2, \dots, v_{\ell} = v'$ of vertices in the graph  $\Psi(\{A, B \} )$ so that $v_i, v_{i+1}$ are connected by an edge. It follows from the definitions that the edges
$f_1, f_2, \dots, f_\ell $ such that  $\zeta_{v_i}(f_i)=e$, $i =1, \dots, \ell$,   are identified in $\widehat T$. Conversely, it follows from the definitions in a similar fashion that if
$v, v' \in V_{ \{A, B \} } X    \vee  V_{ \{A, B \} } Y $
and the edges $f, f'$ such that $\zeta_{v}(f)=e=\zeta_{v'}(f')$ are identified in
$\widehat T$, then $v, v'$ belong to the same connected component of $\Psi(\{A, B \} )$.
These remarks prove that there is a bijection between the set
$\gamma_T^{-1}  \delta_T^{-1} (e)  =  \{  f_1,
\dots, f_k    \} \subseteq  \widehat T $ and the set of connected components of the graph
$\Psi(\{A, B \} )$. Since  $\widehat T$ is a tree with $|D|$ vertices of degree 1 whose set we denote $D''$, it follows that the graph $\widehat T - (\gamma_T^{-1}  \delta_T^{-1} (e ) )$   splits into $k+1$ connected components which induce a partition $D'' = \bigvee_{i=1}^{k+1} C''_i$,  where for every $i$, $C''_i$ is nonempty and either $C''_i \subseteq A''$ or $C_i \subseteq B''$. Then $D = \bigvee_{i=1}^{k+1} C_i$,
where $C''_i  $ is the image of $C_i$ under the map $d \to d''$ for every $d \in D$, provides a desired partition for $D$
because,  for every $v_0$-vertex $v \in V Q$, where $Q \in \{ X, Y \}$, the vertices of
$A'' \cap D(v)'' \subseteq D''$ and those of $B'' \cap D(v)''$ are connected in $\widehat T - (\gamma_T^{-1}  \delta_T^{-1} (e  ) )$ by images of edges of $M_T( D(v) ) - \zeta_v^{-1}( \{ e \} )   $ of $\widehat T$. Therefore, $D(v) \cap A \subseteq C_{i_{vA}}$ for some $i_{vA}$ whenever the intersection $D(v) \cap A$ is not empty and
$D(v) \cap B \subseteq C_{i_{vB}}$ for some $i_{vB}$ whenever the intersection   $D(v) \cap B$ is nonempty.
\smallskip

Now we will prove the converse. Assume that for every partition $D = A \vee B$
there exists a  partition $D = \bigvee_{i=1}^{k+1} C_i$ with the properties of Theorem~\ref{AB}. Arguing on the contrary, suppose that there is a tree $T$ for which the graph  $\widehat T$ is not a tree. Pick a shortest circuit  $p$ in
$\widehat T$ of positive length. Let $e$ be a nonoriented edge of $T$ such that $e \in \delta_T\gamma_T(p)$.  The graph $T - \{ e \}$ consists of two
connected components $T_{eA}, T_{eB}$ which define a partition $D' = A' \vee B'$, where  $ A'  \subseteq T_{eA}$,  $ B'  \subseteq T_{eB}$, and both $ A',   B' $ are  nonempty. Let $k$ denote the number of connected components of the graph
$\Psi(\{A, B \} )$.  It is clear from Lemma~\ref{ZT} and the definitions that $k \ge 1$.  According to our assumption, there exists a partition
$D = \bigvee_{i=1}^{k+1} C_i$, where, for every $i$, $C_i  $ is nonempty, $C_i \subseteq A$ or $C_i \subseteq B$, and, for every $v_0$-vertex $v \in V Q$, where $Q \in \{ X, Y \}$, we have that  $D(v) \cap A \subseteq C_{i_{vA}}$  for some $i_{vA}$ whenever  $D(v) \cap A$  is nonempty and we have that   $D(v) \cap B \subseteq C_{i_{vB}}$  for some $i_{vB}$ whenever  $D(v) \cap B$  is nonempty.

As above, we observe that if  $v, v' \in V_{ \{A, B \} } X    \vee  V_{ \{A, B \} } Y $ are two vertices, then the edges $\zeta^{-1}_{v}(e)$, $\zeta^{-1}_{v'}(e)$ are identified in  $\widehat T$ if and only if $v, v'$ belong to the same connected component of the graph  $\Psi(\{A, B \} )$. This implies that the edges $\gamma_T^{-1}  \delta_T^{-1} (e ) \subseteq \widehat T$ are in bijective correspondence with connected components of $\Psi(\{A, B \} )$. Hence, there are $k$ edges in $\gamma_T^{-1}  \delta_T^{-1} (e )$. Denote
$\gamma_T^{-1}  \delta_T^{-1} (e ) =   \{ f_1,  \dots, f_k \}$.
Since identification of subgraphs
$$
M_{T, X}(D(w)) \subseteq M_{T}(D(\alpha_X(w) ))    \quad \mbox{and} \quad  M_{T, Y}(D(w)) \subseteq M_{T}(D(\alpha_Y(w) ))
$$
in process of construction of $\widehat T$
respects the  partition $D' = \bigvee_{i=1}^{k+1} C'_i$, it follows that the graph
$\widehat T$ consists of pairwise disjoint  subgraphs $S_1, \dots, S_{k+1}$, where
$$
\delta_T\gamma_T(S_j) \cap  D'= C'_j , \quad j = 1, \dots, k+1 ,
$$
which are joined by $k$ nonoriented  edges of  $ \gamma_T^{-1} \delta_T^{-1} (e )$. Collapsing subgraphs $S_1, \dots, S_{k+1}$ into points, we see that  $\widehat T$ turns into a tree with $k$ nonoriented edges and $k+1$ vertices.  Consequently, a shortest circuit in  $\widehat T$  of positive length may not contain any edge of
$ \gamma_T^{-1} \delta_T^{-1} ( e )$. This contradiction to the choice of the path $p$ and the edge $e$ in
 $  \delta_T\gamma_T(p)$ completes the proof of Theorem~\ref{AB}.
\end{proof}

\end{document}